\documentclass[reqno]{amsart}

\usepackage{amsmath}
\usepackage{amsthm}
\usepackage{amsfonts}
\usepackage{amssymb}
\usepackage{enumitem}
\usepackage{hyperref}

\newcommand{\indicator}[1]{\ensuremath{\mathbf{1}_{\{#1\}}}}
\newcommand{\oindicator}[1]{\ensuremath{\mathbf{1}_{{#1}}}}


\DeclareMathOperator{\cov}{Cov}
\DeclareMathOperator{\var}{Var}

\newcommand{\Prob}{\mathbb{P}}
\newcommand{\E}{\mathbb{E}}
\newcommand{\C}{\mathbb{C}}
\renewcommand{\P}{\mathbb{P}}
\renewcommand\Re{\operatorname{Re}}
\renewcommand\Im{\operatorname{Im}}
\newcommand{\eps}{\varepsilon}
\newcommand*\lap{\mathop{}\!\Delta}

\def\R{\mathbb{R}}

\theoremstyle{plain}
  \newtheorem{theorem}{Theorem}[section]
  
  \newtheorem{lemma}[theorem]{Lemma}
  \newtheorem{corollary}[theorem]{Corollary}


\theoremstyle{definition}
  \newtheorem{definition}[theorem]{Definition}
  
  \newtheorem{assumption}[theorem]{Assumption}

\theoremstyle{remark}
  \newtheorem{remark}[theorem]{Remark}


\newcommand{\set}[1]{\left\{#1\right\}} 
\newcommand{\abs}[1]{\left\vert#1\right\vert} 

\begin{document}

\title[Partial linear eigenvalue statistics]{Partial linear eigenvalue statistics for non-Hermitian random matrices}

\author{Sean O'Rourke}
\address{Department of Mathematics\\ University of Colorado\\ Campus Box 395\\ Boulder, CO 80309-0395\\USA}
\email{sean.d.orourke@colorado.edu}
\thanks{S. O'Rourke has been supported in part by NSF grants ECCS-1610003 and DMS-1810500.}

\author{Noah Williams}
\address{Department of Mathematical Sciences\\ Appalachian State University\\ 342 Walker Hall\\ 121 Bodenheimer Dr\\ Boone, NC 28608\\USA}
\email{williamsnn@appstate.edu}

\begin{abstract}
For an $n \times n$ independent-entry random matrix $X_n$ with eigenvalues $\lambda_1, \ldots, \lambda_n$, the seminal work of Rider and Silverstein \cite{RS} asserts that the fluctuations of the  linear eigenvalue statistics $\sum_{i=1}^n f(\lambda_i)$ converge to a Gaussian distribution for sufficiently nice test functions $f$.  We study the fluctuations of $\sum_{i=1}^{n-K} f(\lambda_i)$, where $K$ randomly chosen eigenvalues have been removed from the sum.  In this case, we identify the limiting distribution and show that it need not be Gaussian.  Our results hold for the case when $K$ is fixed as well as the case when $K$ tends to infinity with $n$.  

The proof utilizes the predicted locations of the eigenvalues introduced by E. Meckes and M. Meckes \cite{MM}.  As a consequence of our methods, we obtain a rate of convergence for the empirical spectral distribution of $X_n$ to the circular law in Wasserstein distance, which may be of independent interest.  

\end{abstract}

\maketitle

\section{Introduction}

Suppose $X_n$ is an $n \times n$ matrix with entries in $\mathbb{C}$ and eigenvalues denoted $\lambda_1(X_n), \ldots, \lambda_n(X_n) \in \mathbb{C}$ (counted with algebraic multiplicity).  Let $\mu_{X_n}$ be the empirical spectral measure of $X_n$ defined by
\[ \mu_{X_n} := \frac{1}{n} \sum_{i=1}^n \delta_{\lambda_i(X_n)}, \]
where $\delta_z$ is a unit point mass at $z$.  

The well-known circular law asserts that when the entries of $X_n$ are independent and identically distributed (iid) copies of a random variable with mean zero and unit variance, the empirical spectral measure $\mu_{X_n/\sqrt{n}}$ of $X_n/\sqrt{n}$ converges almost surely to $\mu_{\mathrm{disk}}$, the uniform probability measure on the unit disk centered at the origin in the complex plane.  This was established in a series of papers \cite{B, E, Ginibre, Girko, GT, Mehta,TVcirc}, with the general case stated above being obtained by Tao and Vu \cite{TVESD}; we refer the reader to the survey \cite{BC} and references therein for more complete bibliographical details.  

After studying the limiting distribution, the next natural question concerns the fluctuations of $\int f d \mu_{X_n/\sqrt{n}}$ for an appropriate choice of test function $f$.  We define $S_n[f](X_n)$ to be the centered linear spectral statistic  
\begin{equation} \label{def:Snf}
	S_n[f](X_n) := \sum_{i=1}^n f(\lambda_i(X_n)) - \E\left[\sum_{i=1}^n f(\lambda_i(X_n))\right]
\end{equation}
associated to the $n \times n$ matrix $X_n$ and the test function $f$.  

In the case when the entries of the $n \times n$ matrix $X_n$ are iid random variables with mean zero, unit variance, and which satisfy some additional moment and regularity requirements, Rider and Silverstein \cite{RS} showed that $S_n[f](X_n/\sqrt{n})$ converges in distribution to a Gaussian random variable as $n \to \infty$ for test functions $f$ analytic in a neighborhood of the disk $\{z \in \mathbb{C} : |z| \leq 4\}$.  This result has been extended and generalized in subsequent works; see, for example, \cite{CES, CES2, CO, Jana, Kopel, KOV, OR15, RV}.  

In this paper, we focus on the fluctuations of the partial linear eigenvalue statistics 
\[ \sum_{i=1}^{n-K} f(\lambda_i(X_n/\sqrt{n} )), \]
where $K$ randomly chosen eigenvalues have been removed from the sum.  In contrast to the results cited above, we show that the limiting distribution is no longer Gaussian in this case.  
This phenomenon was first observed by Johansson (see Remark 2.1 of \cite{J}) for random unitary matrices.  
Results for partial linear eigenvalue statistics of Hermitian random matrices have previously appeared in \cite{BPZ, OS}.  
Limit laws and other results are also known for thinned point processes coming from random matrix theory, see \cite{BD, BP2, BP, G} and references therein.  
In this paper, we consider an ensemble of non-Hermitian random matrices with independent entries.  To the best of the authors' knowledge, no results are known for the partial linear eigenvalue statistics of this ensemble.  

\subsection{The model and notation}

We focus on the following model of random matrices with iid entries.  

\begin{definition}[iid random matrix] \label{def:ie}
An \emph{iid matrix} is a random $n \times n$ matrix $X_n = (x_{ij})$ (or more precisely a sequence $X_1, X_2, \ldots$ of such matrices) whose entries $x_{ij}$, $i, j \geq 1$ are independent copies of a complex-valued random variable $\xi$.  In this case, $\xi$ is called the \emph{atom variable} (or \emph{atom distribution}) of $X_n$.  
\end{definition}

There are many examples of iid matrices.  The case when the entries of $X_n$ are iid with the standard complex normal distribution is known as the complex Ginibre ensemble.  The real Ginibre ensemble is similarly defined when the entries of $X_n$ are iid with the real standard normal distribution.  The Bernoulli-Rademacher case, when the iid entries take the values $\pm 1$ with equal probability, provides another example.  

We consider iid matrices $X_n$ whose atom distribution $\xi$ satisfies the following assumptions.

\begin{assumption} \label{assump:moments}
We assume that $\xi$ has mean zero and unit variance.  In addition, we assume $\xi$ has finite moments of all orders, i.e., for any $p \in \mathbb{N}$, there is a constant $C_p > 0$, such that
\[ \E| \xi|^p \leq C_p. \]
\end{assumption}

Our main results focus on a class of test functions with polynomial growth at infinity.
\begin{definition}[Functions with polynomial tails]
	We say that the function $f:\C \to \C$ has a \textit{polynomial tail} if there exists a constant $C > 0$ and a natural number $m$ so that
	\[
	\abs{{f(z)}} \leq C(1 + |z|^m)
	\]
	\label{def:polyTails}
	for all $z \in \mathbb{C}$.  
\end{definition}

Throughout the paper, we use asymptotic notation (such as $O, o, \ll$) under the assumption that $n \to \infty$.  We use $U=O(V)$, $V=\Omega(U)$, $U \ll V$, or $V \gg U$ to denote the estimate $|U| \leq C |V|$ for some constant $C > 0$ independent of $n$ and all $n \geq C$.  If $C$ depends on a parameter, e.g., $C = C_k$, we will indicate this with subscripts, e.g., $U = O_k(V)$.  We write $U = \Theta(V)$ if $U \ll V \ll U$.  We write $U = o(V)$ if $|U| \leq a_n V$ for some sequence $a_n$ that goes to zero as $n \to \infty$.  We allow the implicit constant $C$ and the sequence $a_n$ in our asymptotic notation to depend on the constants $C_p$, $p \in \mathbb{N}$ from Assumption \ref{assump:moments} without denoting this dependence.  We say an event $E$ (which depends on $n$) holds with \emph{overwhelming probability} if for every $\alpha > 0$, $\Prob(E) \geq 1 - O_\alpha(n^{-\alpha})$.  

We denote the discrete interval $[n]:= \set{1, 2, \ldots, n}$.  $\sqrt{-1}$ denotes the imaginary unit, and we reserve $i$ as an index.  $\mu_{\mathrm{disk}}$ will denote the uniform probability measure on the unit disk centered at the origin in the complex plane.  We use $d^2 z$ to denote integration with respect to the Lebesgue measure on $\mathbb{C}$, e.g., $\int_{\mathbb{C}} f(z) d^2 z$; for complex line integrals, we integrate against $dz$, e.g., $\oint_{\mathcal{C}} f(z) dz$.  For a set $S$, $|S|$ denotes the cardinality of $S$ and $S^c$ is the complement.  For an event $E$, $\oindicator{E}$ is the indicator function of $E$.  For a square integrable random variable $\xi$, $\var(\xi)$ is its variance.  More generally, for two square integrable (real-valued) random variables $\xi, \psi$, their covariance is denoted $\cov(\xi, \psi)$ and defined as
\[ \cov(\xi, \psi) := \E [ \left(\xi - \E [\xi] \right) \left(\psi - \E [\psi] )\right ]. \]

\subsection{Partial linear eigenvalue statistics}
For the remainder of the paper we will need to fix an ordering for the eigenvalues $\lambda_1(X_n), \ldots, \lambda_n(X_n) \in \mathbb{C}$.  Any ordering will suffice (e.g., one can first order by magnitude, and in the event of a tie, order by the argument).  We consider the fluctuations of
\[ \sum_{i \in [n] \setminus I_n} f( \lambda_i(X_n/\sqrt{n} ) ), \]
where $I_n \subset [n]$ is a random set of cardinality $|I_n| = K_n$.  In other words, we will consider the linear eigenvalue statistic $S_n[f](X_n/\sqrt{n})$ when $K_n$ eigenvalues are removed uniformly at random from the sum.  Our main results show that in this case, the limiting distribution need not be Gaussian.  For simplicity, we first illustrate our main results in the case when $X_n$ is a complex Ginibre matrix.

\begin{theorem}[Complex Ginibre case] \label{thm:ex}
Suppose $X_n$ is an $n \times n$ random matrix drawn from the complex Ginibre ensemble.  Let the function $f: \mathbb{C} \to \mathbb{R}$ have a polynomial tail and possess continuous partial derivatives in a neighborhood of the unit disk $\{z \in \mathbb{C} : |z| \leq 1\}$.  Let $K_n \geq 1$ be an integer sequence, and assume $I_n \subset [n]$ is chosen uniformly at random (independently from $X_n$) from among all  subsets of $[n]$ of size $K_n$.  
\begin{itemize}[leftmargin=7mm, itemindent=0mm, labelsep=1mm]
\item If for all sufficiently large $n$, $K_n = K$ is constant, then
\small
\begin{equation*}
\qquad\ \sum_{i \in [n] \setminus I_n} \!\!\!\!f(\lambda_i(X_n/\sqrt{n})) - \E \left[ \sum_{i \in [n] \setminus I_n} \!\!\!\!f(\lambda_i(X_n/\sqrt{n}))  \right]\!  \xrightarrow{n\to \infty} \mathcal{S} - \sum_{i =1}^{K}\left[f(U_i)- \E f(U_i)\right]
\end{equation*}
\normalsize
in distribution, where $\mathcal{S}, U_1, \ldots, U_K$ are jointly independent random variables, $U_1, \ldots, U_K$ have common distribution $\mu_{\mathrm{disk}}$, and $\mathcal{S}$ is a mean zero normal random variable with variance depending only on $f$ (the formula for the variance is given by \eqref{eq:rvvariance} below when the atom distribution $\xi$ is taken to be a standard complex normal random variable).  
\item If $K_n \to \infty$ as $n \to \infty$ and $K_n = O( n^{1/4 - \eps} )$ for some fixed $\eps \in (0, 1/4)$, then
\small
\[%
\quad\ \frac{1}{\sqrt{K_n}}\left(\sum_{i \in [n] \setminus I_n} f(\lambda_i(X_n/\sqrt{n})) - \E \left[ \sum_{i \in [n] \setminus I_n} f(\lambda_i(X_n/\sqrt{n})) \right]\right) \xrightarrow{n\to \infty} \mathcal{N}%
\]%
\normalsize
in distribution, where $\mathcal{N}$ has the normal distribution with mean zero and variance $\var f(U)$
	and $U$ has the uniform distribution $\mu_{\mathrm{disk}}$ on the unit disk.  
\end{itemize}
\end{theorem}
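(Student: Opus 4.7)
The argument rests on the decomposition
\begin{equation*}
T_n := \sum_{i\in[n]\setminus I_n} f(\lambda_i(X_n/\sqrt{n})) - \E\sum_{i\in[n]\setminus I_n} f(\lambda_i(X_n/\sqrt{n})) = \left(1-\tfrac{K_n}{n}\right) S_n[f](X_n/\sqrt{n}) - R'_n,
\end{equation*}
where
\[
R'_n := \sum_{i\in I_n} f(\lambda_i(X_n/\sqrt{n})) - \frac{K_n}{n}\sum_{i=1}^n f(\lambda_i(X_n/\sqrt{n})).
\]
Because $I_n$ is independent of $X_n$ and uniform on size-$K_n$ subsets of $[n]$, one has $\E[R'_n\mid X_n]=0$, so $R'_n$ is a pure sampling-without-replacement fluctuation that is conditionally decoupled from the global statistic $S_n[f]$. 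I would treat the two summands separately and combine them via a conditioning argument on the joint characteristic function.

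The first summand converges to a centered Gaussian $\mathcal S$ by Rider--Silverstein \cite{RS} (extended from the analytic setting to test functions merely $C^1$ on a neighborhood of the closed unit disk by results in the spirit of \cite{CES, Kopel, KOV}). For $R'_n$ the plan is to write
\[
\E\!\left[e^{\sqrt{-1}\,tS_n[f](X_n/\sqrt n)+\sqrt{-1}\,sR'_n}\right] = \E\!\left[e^{\sqrt{-1}\,tS_n[f](X_n/\sqrt n)}\,\E\!\left[e^{\sqrt{-1}\,sR'_n}\mid X_n\right]\right],
\]
so it suffices to show the inner conditional characteristic function converges in probability to an explicit deterministic limit. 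In the fixed-$K_n=K$ regime, the circular law $\mu_{X_n/\sqrt n}\to\mu_{\mathrm{disk}}$ together with the elementary fact that sampling $K$ items without replacement from a large collection whose empirical measure is close to $\mu_{\mathrm{disk}}$ mimics $K$ i.i.d.\ draws from $\mu_{\mathrm{disk}}$ identifies this limit as $\phi(s)^K$, with $\phi$ the characteristic function of $f(U)-\E f(U)$, $U\sim\mu_{\mathrm{disk}}$; the outer expectation then factorizes in the limit and yields $\mathcal S-\sum_{j=1}^K [f(U_j)-\E f(U_j)]$ with the asserted independence. In the growing-$K_n$ regime, dividing by $\sqrt{K_n}$ kills the first summand since $S_n[f]=O_{\P}(1)$ and $K_n\to\infty$, and a H\'ajek-style CLT for sampling without replacement applied conditionally on $X_n$ gives $R'_n/\sqrt{K_n}\to \mathcal N(0,\var f(U))$; here the conditional variance $\tfrac{n-K_n}{n-1}\cdot\tfrac{1}{n}\sum_i(f(\lambda_i)-\bar f_n)^2$ tends almost surely to $\var f(U)$ by the circular law, with the polynomial tail of $f$ and Assumption~\ref{assump:moments} controlling atypical eigenvalues and supplying the Lindeberg condition.

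\textbf{Main obstacle.} The delicate point is the quantitative comparison of the randomly sampled eigenvalues with deterministic surrogates, needed both to identify the conditional limit uniformly enough in $X_n$ for the fixed-$K$ argument and to keep the substitution error negligible on the scale $\sqrt{K_n}$ throughout the range $K_n=O(n^{1/4-\eps})$. This is where the Meckes--Meckes predicted locations \cite{MM} are essential: they supply deterministic points $z_1,\ldots,z_n$ whose empirical measure is close to $\mu_{\mathrm{disk}}$ and which couple to the spectrum at a Wasserstein rate strong enough to absorb the error of substituting $z_i$ for $\lambda_i$ inside $R'_n$. After this substitution $R'_n$ depends only on $I_n$ and is therefore exactly independent of $S_n[f]$, so the joint limit becomes the product of the two marginal limits described above. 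Establishing the sharpened Wasserstein rate for the empirical spectral distribution (the one alluded to in the abstract) and showing the substitution error is $o(\sqrt{K_n})$ throughout the admissible range of $K_n$ constitute the main technical work of the proof.
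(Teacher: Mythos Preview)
Your proposal is correct, and in its resolution of the ``main obstacle'' it lands on the paper's approach: replace the sampled eigenvalues by the Meckes--Meckes predicted locations so that the sampled piece depends only on $I_n$ (hence is exactly independent of $S_n[f]$), then combine the two marginal limits. The paper writes this as $T_n=S_n[f]-\mathcal A_n+\mathcal B_n$ with $\mathcal A_n=\sum_{i\in I_n}f(\widetilde\lambda_{\tau_n(i)})-\E[\,\cdot\,]$ and $\mathcal B_n$ the substitution error; your $(1-K_n/n)S_n[f]-R'_n$ differs from this only by $O(K_n/n)$ terms. The exact independence is obtained in the paper via the clean observation (Lemma~\ref{lemma:MMind}) that $\tau_n(I_n)$ is again uniform on $K_n$-subsets and independent of $X_n$, and the CLT input for $S_n[f]$ in the Ginibre case is Rider--Vir\'ag~\cite{RV} rather than~\cite{RS}.

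One remark worth making: you may be overestimating the obstacle. Your initial plan---analyzing $\E[e^{isR'_n}\mid X_n]$ via the circular law for fixed $K$ and via a H\'ajek finite-population CLT for growing $K_n$---already goes through without any substitution. On the overwhelming-probability event that all eigenvalues lie in $\{|z|\le 1+\eta\}$ the values $f(\lambda_i)$ are uniformly bounded, so the H\'ajek/Lindeberg condition is trivial and the conditional variance $\tfrac{n-K_n}{n-1}\cdot\tfrac1n\sum_i(f(\lambda_i)-\bar f_n)^2$ converges almost surely to $\var f(U)$ by the circular law alone; since the conditional limit is deterministic and all integrands are bounded, dominated convergence upgrades this to the joint limit with $S_n[f]$. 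This more elementary route does not even require the restriction $K_n=O(n^{1/4-\eps})$. The paper's substitution argument, by contrast, is what yields the Wasserstein rate of Theorem~\ref{thm:iem:main} as a by-product and is formulated uniformly over all atom distributions satisfying Assumption~\ref{assump:moments}.
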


Theorem \ref{thm:ex} follows immediately by combining our main results (Theorems \ref{thm:finiteStats} and \ref{thm:infiniteStats}) below with Theorem 1.1 from \cite{RV}.  A few remarks concerning Theorem \ref{thm:ex} are in order.  Firstly, in both cases, the limiting distribution for the partial linear eigenvalue statistics differs from the limiting distribution of the full linear eigenvalue statistic $S_n[f](X_n/\sqrt{n})$ (the limiting distribution of $S_n[f](X_n/\sqrt{n})$ in this case is given by the normal random variable $\mathcal{S}$, see \cite{RV}).  Even if only one randomly selected eigenvalue is removed from the sum, the limiting distribution is no longer normal.  Secondly, Theorem \ref{thm:ex} shows that the more eigenvalues that are removed from the sum, the larger the variance will be.  In the extreme case where $K_n$ tends to infinity with $n$, this can be seen by the fact that one must normalize by a factor of $\sqrt{K_n}$ in order to obtain a limiting distribution.  (A similar phenomenon has been observed for thinned determinantal processes \cite{G}.)  Thirdly, we believe the condition $K_n = O(n^{1/4 - \eps})$ is an artifact of our proof (we require this bound for some technical estimates that appear in the proof). We anticipate that more advanced techniques or a different method may be able to relax this assumption.  

In our main results below, we extend Theorem \ref{thm:ex} to the case when $X_n$ is an iid matrix with atom distributions other than the complex normal distribution.  

%

\begin{theorem}[Partial linear statistics when a fixed number of eigenvalues have been removed]
	Suppose $X_n$ is an $n\times n$ iid random matrix whose atom distribution satisfies Assumption \ref{assump:moments}, $f:\C \to \C$ is a function with a polynomial tail that is Lipschitz continuous in a neighborhood of the disk $\set{z \in \C: \abs{z} \leq 1}$, and 
	$K \geq 1$ is a fixed integer. Suppose also that the statistic  $S_n[f](X_n/\sqrt{n})$ converges in distribution as $n \to \infty$ to the random variable $\mathcal{S}$. If $I_n \subset [n]$ is chosen uniformly at random (independently from $X_n$) from among all subsets of $[n]$ of size $K$, then
	\begin{equation}
	\!\!\sum_{i \in [n] \setminus I_n}\!\!\!\!\! f(\lambda_i(X_n/\sqrt{n})) - \E \left[ \sum_{i \in [n] \setminus I_n}\!\!\!\!\! f(\lambda_i(X_n/\sqrt{n}))  \right]\!\! \xrightarrow{n\to \infty} \mathcal{S} - \sum_{i =1}^{K}\left[f(U_i)- \E f(U_i)\right]
	\label{eqn:FinStats1}
	\end{equation}
	\normalsize
	and
	\begin{equation}
	\sum_{i \in I_n} f(\lambda_i(X_n/\sqrt{n})) - \E \left[ \sum_{i \in I_n} f(\lambda_i(X_n/\sqrt{n})) \right]\! \xrightarrow{n\to \infty} \sum_{i =1}^{K}\left[f(U_i)- \E f(U_i)\right]
	\label{eqn:FinStats2}
	\end{equation}
	in distribution, where $\mathcal{S}$, $U_1, \ldots, U_{K}$ are jointly independent random variables and $U_1, \ldots, U_K$ have common distribution $\mu_{\mathrm{disk}}$.  
	\label{thm:finiteStats}
\end{theorem}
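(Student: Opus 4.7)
The plan is to decompose the partial statistic as the full linear statistic minus the sum over the removed indices, and to show that this removed sum, after centering, converges to the desired limit \emph{independently} of the full linear statistic. Set $T_n := \sum_{i \in I_n} f(\lambda_i(X_n/\sqrt{n}))$ and $\widetilde T_n := T_n - \E T_n$, so that
\[
\sum_{i \in [n]\setminus I_n}\! f(\lambda_i(X_n/\sqrt n)) - \E\!\!\!\sum_{i \in [n]\setminus I_n}\!\! f(\lambda_i(X_n/\sqrt n)) = S_n[f](X_n/\sqrt n) - \widetilde T_n.
\]
Thus \eqref{eqn:FinStats1} reduces to \eqref{eqn:FinStats2} together with joint convergence of $(S_n[f](X_n/\sqrt n), \widetilde T_n)$ to $(\mathcal S, \sum_{j=1}^K(f(U_j)-\E f(U_j)))$ with those two limit variables independent.

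To analyze $T_n$, I would condition on $X_n$. Given $X_n$, the set $I_n$ is uniform over size-$K$ subsets of $[n]$, so $T_n$ is $K$ eigenvalues of $X_n/\sqrt n$ sampled without replacement; since $K$ is fixed and $n \to \infty$, the total-variation distance between sampling without and with replacement is $O(K^2/n) \to 0$, and one may work instead with $K$ iid samples $Z_1,\dots,Z_K$ from the empirical spectral measure $\mu_{X_n/\sqrt n}$ conditional on $X_n$. By the circular law, $\mu_{X_n/\sqrt n}$ converges weakly to $\mu_{\mathrm{disk}}$ almost surely, so for any bounded continuous $\psi:\R\to\R$ and deterministic constants $c_n \to c_\infty$,
\[
\E\!\left[\psi\!\left(\sum_{j=1}^K f(Z_j) - c_n\right) \,\bigg|\, X_n\right] \xrightarrow[n\to\infty]{\text{a.s.}} \int_{\C^K}\! \psi\!\left(\sum_{j=1}^K f(z_j) - c_\infty\right) d\mu_{\mathrm{disk}}^{\otimes K}(z),
\]
the integrand being bounded continuous on $\C^K$. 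Taking $c_n = \E T_n = K\,\E\!\int f\, d\mu_{X_n/\sqrt n}$ (by symmetry in the choice of $I_n$) and using uniform integrability of $|\cdot|^m$ under $\E\mu_{X_n/\sqrt n}$, which follows from Assumption \ref{assump:moments} via standard Frobenius-norm bounds on $\E\sum_i |\lambda_i(X_n/\sqrt n)|^{2k}$ (or from the Meckes--Meckes predicted locations cited in the abstract), gives $c_n \to K\!\int f\, d\mu_{\mathrm{disk}} = K\,\E f(U)$.

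Consequently $\E[\psi(\widetilde T_n)\mid X_n]$ converges almost surely to the \emph{deterministic} limit $\E\psi(\sum_{j=1}^K(f(U_j)-\E f(U_j)))$. Since $\phi(S_n[f](X_n/\sqrt n))$ is $X_n$-measurable, the tower property yields
\[
\E[\phi(S_n[f](X_n/\sqrt n))\,\psi(\widetilde T_n)] = \E\!\left[\phi(S_n[f](X_n/\sqrt n))\,\E[\psi(\widetilde T_n)\mid X_n]\right]
\]
for any bounded continuous $\phi$. Using a Slutsky-type argument with the hypothesis $S_n[f](X_n/\sqrt n) \xrightarrow{d} \mathcal S$ and the bounded a.s.\ convergence of the conditional expectation, the right-hand side tends to $\E\phi(\mathcal S)\cdot\E\psi(\sum_{j=1}^K(f(U_j)-\E f(U_j)))$. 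This factorization gives joint convergence of $(S_n[f](X_n/\sqrt n), \widetilde T_n)$ to independent limits, yielding both \eqref{eqn:FinStats1} and \eqref{eqn:FinStats2}.

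The main obstacle I anticipate is the polynomial growth of $f$: the circular law provides only weak convergence and does not apply directly to unbounded test functions. I would handle this by truncating $f$ outside a disk slightly larger than the unit disk (where $f$ is Lipschitz by hypothesis), and using Assumption \ref{assump:moments} together with bounds on the number and magnitude of outlier eigenvalues of $X_n/\sqrt n$ (e.g., $\E\sum_i |\lambda_i(X_n/\sqrt n)|^{2k} \ll n$) to show that the tail contribution to both $T_n$ and $\E T_n$ is negligible. The same tail control underlies the convergence $c_n \to c_\infty$ and the passage from sampling without replacement to with replacement.
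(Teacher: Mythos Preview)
Your approach is correct and takes a genuinely different route from the paper's.  You condition on $X_n$ and exploit the almost-sure weak convergence $\mu_{X_n/\sqrt n}\to\mu_{\mathrm{disk}}$ (the circular law) to show that $\E[\psi(\widetilde T_n)\mid X_n]$ converges almost surely to a \emph{deterministic} constant; combined with the tower property and $S_n[f](X_n/\sqrt n)\xrightarrow{d}\mathcal S$, this yields the factorisation of the limiting joint characteristic function and hence the independence of the two limits.  The paper instead interposes the deterministic Meckes--Meckes predicted locations $\widetilde\lambda_i$: it writes the removed sum as $\mathcal A_n-\mathcal B_n$, where $\mathcal A_n=\sum_{i\in I_n}f(\widetilde\lambda_{\tau_n(i)})-\E[\cdots]$ is shown to be \emph{exactly} independent of $X_n$ for every $n$ (because $\tau_n(I_n)$ is independent of $X_n$ and uniformly distributed), and the remainder $\mathcal B_n\to 0$ in probability is obtained from a quantitative Wasserstein bound built on the local circular law.

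What each approach buys: your argument is more elementary for fixed $K$, requiring only the weak circular law and the spectral-radius bound (to justify the truncation you outline), and avoids the local circular law machinery entirely.  The paper's construction, on the other hand, is designed to treat Theorems~\ref{thm:finiteStats} and~\ref{thm:infiniteStats} in one stroke; when $K_n\to\infty$ your conditional argument would need a \emph{uniform} conditional CLT for $\widetilde T_n/\sqrt{K_n}$ given $X_n$, which brings back the need for quantitative control on $\mu_{X_n/\sqrt n}$.  Your handling of the polynomial tail of $f$ (truncate on the overwhelming-probability event $\{\text{all eigenvalues in }|z|\le 1+\eta\}$, then use Cauchy--Schwarz and polynomial operator-norm moment bounds on the complement, both for $T_n$ and for $c_n\to K\int f\,d\mu_{\mathrm{disk}}$) is essentially the same device the paper uses in its Lemma on $\mathcal B_n$.
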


\begin{theorem}[Partial linear statistics when a growing number of eigenvalues are removed]
	Suppose $X_n$ is an $n\times n$ iid random matrix whose atom distribution satisfies Assumption \ref{assump:moments}, $f:\C \to \C$ is a function with a polynomial tail that is Lipschitz continuous in a neighborhood of the disk $\set{z \in \C: \abs{z} \leq 1}$, and $K_n \geq 1$ is an integer sequence with the property that $K_n \to \infty$ as $n \to \infty$ and $K_n = O(n^{1/4-\eps})$ for some fixed $\eps \in (0,1/4)$. Suppose also that the statistic $\frac{1}{\sqrt{K_n}}S_n[f](X_n/\sqrt{n})$ converges to zero in probability. If $I_n \subset [n]$ is chosen uniformly at random (independently from $X_n$) from among all subsets of $[n]$ of size $K_n$, then
	\begin{equation}
	\frac{1}{\sqrt{K_n}}\sum_{i \in [n] \setminus I_n} f(\lambda_i(X_n/\sqrt{n})) - \frac{1}{\sqrt{K_n}} \E \left[ \sum_{i \in [n] \setminus I_n} f(\lambda_i(X_n/\sqrt{n})) \right] \xrightarrow{n\to \infty} \mathcal{N}
	\label{eqn:InfStats1}
	\end{equation}
	and
	\begin{equation}
	\frac{1}{\sqrt{K_n}}\sum_{i \in I_n} f(\lambda_i(X_n/\sqrt{n})) - \frac{1}{\sqrt{K_n}} \E \left[ \sum_{i \in I_n} f(\lambda_i(X_n/\sqrt{n})) \right] \xrightarrow{n\to \infty} \mathcal{N},
	\label{eqn:InfStats2}
	\end{equation}
	in distribution, where $\mathcal{N}$ is the complex normal random variable with mean zero and covariances 
	\begin{align} \label{eq:Ncov}
		&\E[ \Re^2 (\mathcal{N})] = \var  (\Re f(U) ), \qquad \E [ \Im^2(\mathcal{N})] = \var ( \Im f(U) ), \\
		&\E[ \Re( \mathcal{N}) \Im(\mathcal{N}) ] = \cov( \Re f(U) , \Im f(U)  ) \label{eq:Ncov2}
	\end{align}
	and $U$ has uniform distribution $\mu_{\mathrm{disk}}$ on the unit disk in the complex plane.  
	\label{thm:infiniteStats}
\end{theorem}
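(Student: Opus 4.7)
The plan is to prove \eqref{eqn:InfStats2} directly and deduce \eqref{eqn:InfStats1} from it. Since
\[
\sum_{i \in [n]\setminus I_n} f(\lambda_i) + \sum_{i \in I_n} f(\lambda_i) = \sum_{i=1}^n f(\lambda_i),
\]
after centering and dividing by $\sqrt{K_n}$ the difference of the left-hand sides of \eqref{eqn:InfStats1} and \eqref{eqn:InfStats2} equals $\tfrac{1}{\sqrt{K_n}} S_n[f](X_n/\sqrt n)$, which tends to zero in probability by hypothesis. Combined with the symmetry $\mathcal{N} \overset{d}{=} -\mathcal{N}$, this reduces the task to proving \eqref{eqn:InfStats2}.

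The core idea is to condition on $X_n$. Given $X_n$, the quantity $\sum_{i \in I_n} f(\lambda_i(X_n/\sqrt n))$ is a simple random sample of size $K_n$ without replacement from the deterministic list $\set{f(\lambda_i(X_n/\sqrt n))}_{i=1}^n$. Its conditional mean equals $\tfrac{K_n}{n} \sum_{i=1}^n f(\lambda_i(X_n/\sqrt n))$, and the discrepancy between conditional and unconditional centering, after the $1/\sqrt{K_n}$ normalization, equals $\tfrac{K_n}{n} \cdot \tfrac{1}{\sqrt{K_n}} S_n[f](X_n/\sqrt n) = o_P(1)$ by hypothesis. A standard finite-population variance computation then gives
\[
\var\!\left( \sum_{i \in I_n} \Re f(\lambda_i) \,\Big|\, X_n \right) = \frac{K_n(n-K_n)}{n-1}\cdot\frac{1}{n}\sum_{i=1}^n \bigl(\Re f(\lambda_i) - \overline{\Re f}\,\bigr)^2,
\]
with analogous formulas for $\Im f$ and for the cross-covariance; after dividing by $K_n$, these are empirical variances and covariances with respect to $\mu_{X_n/\sqrt n}$ multiplied by $(n-K_n)/(n-1)\to 1$.

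Next, I would show that these empirical second moments converge in probability to $\var(\Re f(U))$, $\var(\Im f(U))$, and $\cov(\Re f(U), \Im f(U))$. Since $f$ is only Lipschitz on a neighborhood of the unit disk and polynomial outside, a truncation is required: fix a smooth cutoff $\chi$ supported in $\set{|z|\leq 1+\delta}$ and write $f = f\chi + f(1-\chi)$. To the globally Lipschitz, compactly supported piece $f\chi$ I would apply the Wasserstein rate of convergence of $\mu_{X_n/\sqrt n}$ to $\mu_{\mathrm{disk}}$ obtained as a byproduct of this paper's methods. For the remainder $f(1-\chi)$, the operator-norm bound $\|X_n/\sqrt n\| = O(1)$ with overwhelming probability, available under Assumption~\ref{assump:moments}, restricts attention to eigenvalues in the annulus $\set{1+\delta < |z|\leq C}$; together with the vanishing $\mu_{\mathrm{disk}}$-mass of this set and the polynomial tail of $f$, this contribution is negligible. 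Applying the same scheme to $|f|^2$, which is likewise Lipschitz near the disk and of polynomial growth outside, delivers the required empirical-second-moment convergence.

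Finally, a conditional finite-population CLT for simple random sampling without replacement, applied componentwise to $\Re f$ and $\Im f$, produces the stated complex Gaussian limit with covariances \eqref{eq:Ncov}--\eqref{eq:Ncov2}. The classical H\'ajek condition $\max_i \abs{f(\lambda_i) - \bar f}^2 \,/\, \sum_i \abs{f(\lambda_i) - \bar f}^2 \to 0$ holds in probability because the numerator is $O(1)$ with overwhelming probability by the operator-norm bound and the polynomial tail, while the denominator is of order $n$. The main obstacle is the empirical-variance convergence for test functions with polynomial growth: eigenvalues straddling the spectral edge and potential outliers force quantitative control through the Wasserstein rate, and it is precisely the interplay of this rate with the $1/\sqrt{K_n}$ scale that produces the artifact $K_n = O(n^{1/4-\eps})$ remarked upon after Theorem~\ref{thm:ex}.
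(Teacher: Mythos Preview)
Your approach is correct and genuinely different from the paper's. The paper introduces deterministic ``predicted locations'' $\widetilde\lambda_i$ (from \cite{MM}), couples the eigenvalues of $X_n/\sqrt n$ to them via a permutation $\tau_n$ and an exceptional set $J_n$ (Lemma~\ref{lemma:MMpairs}), and decomposes the partial sum as $S_n[f]-\mathcal A_n+\mathcal B_n$, where $\mathcal A_n$ is a sum of $f(\widetilde\lambda_{\tau_n(i)})$ over $I_n$ and $\mathcal B_n$ is the replacement error. They show $\mathcal B_n\to 0$ in probability, that $\tau_n(I_n)$ is independent of $X_n$ with the same law as $I_n$, and finally couple sampling without replacement to iid draws to apply the classical CLT. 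Your route bypasses the predicted locations entirely: you condition on $X_n$ and invoke the H\'ajek finite-population CLT directly on the random population $\{f(\lambda_i)\}$, verifying the H\'ajek ratio condition and the convergence of empirical second moments via the Wasserstein bound and the spectral-radius estimate. This is more elementary and, contrary to your closing remark, actually \emph{does not} require $K_n=O(n^{1/4-\eps})$: the H\'ajek CLT needs only $K_n\to\infty$, $n-K_n\to\infty$, and the ratio condition, while the empirical-variance step needs only consistency, not a rate. In the paper the restriction enters through the bound $\E[q_n\oindicator{E_n}]\ll n^{o(1)-1/4}K_n$ in Lemma~\ref{lemma:MMCto0}; your argument has no analogous step, so your diagnosis of the source of the restriction is incorrect, but this is a strength of your method rather than a gap.

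Two small points deserve attention. First, your treatment of \eqref{eqn:InfStats2} uses the hypothesis $\tfrac{1}{\sqrt{K_n}}S_n[f]\to 0$ to control the centering discrepancy $\tfrac{\sqrt{K_n}}{n}S_n[f]$; the paper (see the Remark following Theorem~\ref{thm:infiniteStats}) establishes \eqref{eqn:InfStats2} without this hypothesis, because $\mathcal A_n$ is automatically centered once one passes to the deterministic $\widetilde\lambda_i$. Second, H\'ajek's theorem is stated for deterministic triangular arrays, so you should make explicit the passage from conditional to unconditional convergence: since the empirical covariance matrix and the H\'ajek ratio converge in probability to deterministic limits, a subsequence argument (extract a further subsequence with almost-sure convergence of these quantities, apply H\'ajek along it, then invoke the subsubsequence characterization of convergence in probability for the conditional characteristic function) closes the gap. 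With these clarifications your proof stands, and for the theorem as stated it is arguably cleaner than the paper's.
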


\begin{remark}
The assumption that $S_n[f](X_n/\sqrt{n})$ converges in distribution in Theorem \ref{thm:finiteStats} (resp., $\frac{1}{\sqrt{K_n}} S_n[f](X_n/\sqrt{n})$ converges in probability to zero in Theorem \ref{thm:infiniteStats}) is only required to establish \eqref{eqn:FinStats1} (resp., \eqref{eqn:InfStats1}); for the conclusion in \eqref{eqn:FinStats2} (resp., \eqref{eqn:InfStats2}), this assumption is not required.  
\end{remark}



We conclude this section by specializing Theorems \ref{thm:finiteStats} and \ref{thm:infiniteStats} to a few specific examples. 
The first corollary, which handles the case of complex-valued atom distributions, follows immediately from Theorem 2.2 in \cite{CES}, our main results, and the Sobolev embedding theorem\footnote{By identifying $\mathbb{C}$ with $\mathbb{R}^2$, the Sobolev embedding theorem can be used to show that $f$ is Lipschitz continuous in a neighborhood of the disk $\{z \in \mathbb{C} : |z| \leq 1\}$ whenever $f \in H^{s}(\mathbb{C})$ for $s > 2$.} (see, for instance, Theorem 9.17 in \cite{F}).   
\begin{corollary}
	Let $X_n$ be an $n\times n$ iid matrix whose atom variable $\xi$ satisfies Assumption \ref{assump:moments} and $\E[ \xi^2 ] = 0$.   Suppose $f:\C \to \R$ is in the Sobolev space $H^{2 + \delta}(\mathbb{C})$ for some $\delta > 0$ and has compact support. 
	Then the conclusions of Theorems \ref{thm:finiteStats} and \ref{thm:infiniteStats} hold with the random variable $\mathcal{S}$ being the mean zero real Gaussian variable whose variance is given by
	\begin{equation} \label{eq:rvvariance}
	\frac{1}{4\pi}\int_{\abs{z} < 1}\abs{\nabla f(z)}^2\,d^2z + \frac{1}{2} \sum_{k \in \mathbb{Z}} |k| |\hat{f}(k)|^2 + (\E |\xi|^4 - 2) \left( \frac{1}{\pi} \int_{|z| < 1} f(z) d^2 z - \hat{f}(0) \right)^2,
	\end{equation} 
	where $\hat{f}(k)$ denotes the $k$th Fourier coefficient of the restriction of $f$ to the circle $\abs{z} = 1$:
	\begin{equation} \label{eq:Fk}
	\hat{f}(k) := \frac{1}{2\pi}\int_0^{2\pi}f \left(e^{\sqrt{-1}\theta}\right)e^{-\sqrt{-1}k\theta}\,d\theta, \qquad k \in \mathbb{Z}. 
	\end{equation} 
	\label{cor:CES}
\end{corollary}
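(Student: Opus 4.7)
The plan is to verify the hypotheses of Theorems \ref{thm:finiteStats} and \ref{thm:infiniteStats} and to identify the limiting random variable $\mathcal{S}$ via an existing CLT for the full linear statistic. First, since $f$ has compact support, the polynomial-tail condition of Definition \ref{def:polyTails} is immediate. The Sobolev embedding theorem (e.g.\ Theorem 9.17 in \cite{F}) applied on $\R^2 \cong \C$ gives that $H^{2+\delta}(\C)$ embeds into $C^{1,\alpha}(\C)$ for some $\alpha > 0$; in particular $f$ is Lipschitz continuous on any open neighborhood of $\{|z| \leq 1\}$, which is exactly the regularity required by Theorems \ref{thm:finiteStats} and \ref{thm:infiniteStats}.

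Next, I would invoke Theorem 2.2 of \cite{CES}. Its hypotheses apply to iid matrices whose atom variable is mean zero, unit variance, has enough moments (provided by Assumption \ref{assump:moments}), and satisfies $\E[\xi^2] = 0$, for real-valued test functions in $H^{2+\delta}(\C)$ of compact support. That theorem asserts that $S_n[f](X_n/\sqrt{n})$ converges in distribution to a mean zero real Gaussian $\mathcal{S}$ whose variance is precisely \eqref{eq:rvvariance} (bulk contribution $\frac{1}{4\pi}\int_{|z|<1}|\nabla f|^2\,d^2z$, boundary $H^{1/2}$-contribution $\frac12 \sum_k |k|\,|\hat f(k)|^2$ with the Fourier coefficients defined in \eqref{eq:Fk}, and the fourth-cumulant correction proportional to $\E|\xi|^4 - 2$). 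The only bookkeeping step is to confirm that \cite{CES}'s formulation of this variance matches \eqref{eq:rvvariance} under our normalization.

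With $\mathcal{S}$ identified, the conclusion for fixed $K$ follows immediately from Theorem \ref{thm:finiteStats}: equation \eqref{eqn:FinStats1} uses the just-established convergence of $S_n[f]$, while \eqref{eqn:FinStats2} holds unconditionally by the remark following that theorem. For the growing-$K_n$ regime, the only hypothesis of Theorem \ref{thm:infiniteStats} not yet verified is that $\tfrac{1}{\sqrt{K_n}} S_n[f](X_n/\sqrt{n}) \to 0$ in probability; but $S_n[f](X_n/\sqrt{n})$ is tight (it converges in distribution to $\mathcal{S}$) and $K_n \to \infty$, so dividing a tight sequence by $\sqrt{K_n}$ yields convergence to zero in probability. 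Applying Theorem \ref{thm:infiniteStats} then gives \eqref{eqn:InfStats1} and \eqref{eqn:InfStats2}. There is no real obstacle here beyond this bookkeeping: the heavy lifting — the CLT for the full statistic on one hand, and the non-Gaussian limit law for the partial statistic on the other — is already supplied by \cite{CES} and by Theorems \ref{thm:finiteStats}--\ref{thm:infiniteStats}.
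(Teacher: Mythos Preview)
Your proposal is correct and takes essentially the same approach as the paper, which simply states that the corollary follows from Theorem~2.2 in \cite{CES}, the main results (Theorems~\ref{thm:finiteStats} and~\ref{thm:infiniteStats}), and the Sobolev embedding theorem. Your write-up is actually more detailed than the paper's, in particular in spelling out why $\tfrac{1}{\sqrt{K_n}} S_n[f](X_n/\sqrt{n}) \to 0$ in probability via tightness.
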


We note that Corollary \ref{cor:CES} does not apply when the atom variable $\xi$ is real-valued because of the obviously contradictory restrictions this would place on $\E[\xi^2]$. The next corollary, which follows from Theorem 2.2 in \cite{CES2}, deals with this case where the entries of $X_n$ are real-valued random variables.  

\begin{corollary}
Let $X_n$ be an $n\times n$ iid matrix whose real-valued atom variable $\xi$ satisfies Assumption \ref{assump:moments}.   Suppose $f:\C \to \R$ is in the Sobolev space $H^{2 + \delta}(\mathbb{C})$ for some $\delta > 0$ and has compact support. 
	Then the conclusions of Theorems \ref{thm:finiteStats} and \ref{thm:infiniteStats} hold with the random variable $\mathcal{S}$ being the mean zero real Gaussian variable whose variance is given by
	\begin{align*} 
	\frac{1}{2\pi}\int_{\abs{z} < 1} &\abs{\nabla (P_{\mathrm{sym}}f)(z)}^2\,d^2z + \sum_{k \in \mathbb{Z}} |k| |\widehat{P_{\mathrm{sym}}f}(k)|^2 \\ 
	&\qquad\qquad+ (\E |\xi|^4 - 3) \left( \frac{1}{\pi} \int_{|z| < 1} f(z) d^2 z - \hat{f}(0) \right)^2,
	\end{align*} 
	where 
	\[ (P_{\mathrm{sym}}f)(z) := \frac{f(z) + f(\bar{z})}{2} \]
	maps functions on the complex plane to their symmetrizations with respect to the real axis, and
	$\hat{f}(k)$, $\widehat{P_{\mathrm{sym}}f}(k)$ denote the $k$th Fourier coefficients of the restrictions of the functions $f$, $P_{\mathrm{sym}}f$ to the circle $\abs{z} = 1$ as defined in \eqref{eq:Fk}.  
\end{corollary}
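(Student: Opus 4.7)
The plan is to verify the hypotheses of Theorems \ref{thm:finiteStats} and \ref{thm:infiniteStats} in the real-valued atom setting and then apply those theorems directly, using Theorem 2.2 of \cite{CES2} as a black box to identify the limiting distribution $\mathcal{S}$ of the full linear statistic $S_n[f](X_n/\sqrt{n})$.

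First, I would confirm the structural assumptions on $f$ required by Theorems \ref{thm:finiteStats} and \ref{thm:infiniteStats}. The polynomial tail condition of Definition \ref{def:polyTails} is immediate from the compact support of $f$. For Lipschitz continuity in a neighborhood of the closed unit disk, I would appeal to the Sobolev embedding theorem exactly as in the footnote preceding Corollary \ref{cor:CES}: identifying $\mathbb{C}$ with $\mathbb{R}^2$, one has $H^{2+\delta}(\mathbb{R}^2) \hookrightarrow C^{1,\alpha}(\mathbb{R}^2)$ for a suitable $\alpha=\alpha(\delta)>0$, which implies that any $f \in H^{2+\delta}(\mathbb{C})$ is (globally) Lipschitz continuous, in particular on any neighborhood of $\{|z|\le 1\}$.

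Second, I would invoke Theorem 2.2 of \cite{CES2}, which asserts that in the real-valued atom setting (under Assumption \ref{assump:moments} and for test functions in $H^{2+\delta}$ with compact support), $S_n[f](X_n/\sqrt{n})$ converges in distribution to a mean-zero real Gaussian $\mathcal{S}$ whose variance is given precisely by the formula displayed in the statement, involving the symmetrization $P_{\mathrm{sym}}f$, the boundary Fourier coefficients $\widehat{P_{\mathrm{sym}}f}(k)$, and the fourth-moment correction $(\E|\xi|^4-3)$. This distributional convergence is exactly the hypothesis needed for Theorem \ref{thm:finiteStats}. For Theorem \ref{thm:infiniteStats}, since the limit $\mathcal{S}$ is a tight (finite-variance) Gaussian and $K_n\to\infty$, the sequence $\frac{1}{\sqrt{K_n}} S_n[f](X_n/\sqrt{n})$ converges to zero in probability by Slutsky, giving the remaining hypothesis.

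With both hypotheses verified, the conclusions \eqref{eqn:FinStats1}--\eqref{eqn:FinStats2} and \eqref{eqn:InfStats1}--\eqref{eqn:InfStats2} of the main theorems apply verbatim, with the identification of $\mathcal{S}$ coming from \cite{CES2}. There is no substantive obstacle beyond matching regularity assumptions: the Sobolev hypothesis in \cite{CES2} is tailored to deliver both the Lipschitz regularity required by the present paper's main theorems and the CLT for the full linear statistic, so the corollary reduces to assembling the two inputs. The argument is the precise real-valued analogue of the derivation of Corollary \ref{cor:CES}, with Theorem 2.2 of \cite{CES2} replacing Theorem 2.2 of \cite{CES} as the source of the variance formula.
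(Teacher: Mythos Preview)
Your proposal is correct and matches the paper's approach: the corollary is stated without a separate proof, the paper simply noting that it follows from Theorem~2.2 in \cite{CES2} combined with the main theorems, exactly as you outline (with the Sobolev embedding argument already flagged in the footnote preceding Corollary~\ref{cor:CES}). Your additional remark that $\frac{1}{\sqrt{K_n}} S_n[f](X_n/\sqrt{n}) \to 0$ in probability follows from tightness of the Gaussian limit and $K_n\to\infty$ is a helpful detail the paper leaves implicit.
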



\subsection{Rate of convergence to the circular law} 

In the course of proving our main results, we obtain a rate of convergence for the empirical spectral measure of an iid matrix to the uniform probability measure $\mu_{\mathrm{disk}}$ on the unit disk with respect to the  Wasserstein metric.  Recall that for two probability measures $\mu$ and $\nu$ on $\mathbb{C}$, the $L_1$-Wasserstein distance between $\mu$ and $\nu$ is given by
\[ W_1(\mu, \nu) := \inf_{\pi} \int |x - y| d \pi(x,y), \]
where the infimum is over all probability measures $\pi$ on $\mathbb{C} \times \mathbb{C}$ with marginals $\mu$ and $\nu$.  

\begin{theorem}[Wasserstein distance bound] \label{thm:iem:main}
Let $X_n$ be an $n \times n$ iid random matrix whose atom distribution satisfies Assumption \ref{assump:moments}.  Then almost surely, for $n$ sufficiently large, 
\[ W_1 ( \mu_{X_n/\sqrt{n}}, \mu_{\mathrm{disk}}) \leq n^{o(1) - 1/4}, \]
where $\mu_{\mathrm{disk}}$ is the uniform probability measure on the unit disk centered at the origin.
\end{theorem}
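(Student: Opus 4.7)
The plan is to compare $\mu_{X_n/\sqrt{n}}$ to the circular law via an auxiliary empirical measure $\mu_{\mathrm{pred}}$ supported on the Meckes--Meckes predicted eigenvalue locations $z_1,\dots,z_n$ from \cite{MM}. These points are obtained by partitioning the closed unit disk into $n$ equal-area cells $A_1,\dots,A_n$ (using $\Theta(\sqrt{n})$ concentric annuli, each subdivided into $\Theta(\sqrt{n})$ equal-area circular sectors) and picking a representative $z_i\in A_i$. Each cell has diameter $O(n^{-1/2})$, so by transporting $\delta_{z_i}$ onto the uniform distribution on $A_i$, one gets the deterministic bound $W_1(\mu_{\mathrm{pred}},\mu_{\mathrm{disk}})=O(n^{-1/2})$. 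By the triangle inequality the problem reduces to proving $W_1(\mu_{X_n/\sqrt{n}},\mu_{\mathrm{pred}})\leq n^{o(1)-1/4}$ almost surely for large $n$.

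For this, I would work at an intermediate spatial scale $\ell=n^{-1/4}$ and partition a neighborhood of the closed unit disk into boxes $B_j$ of side $\ell$ (a handful of eigenvalues outside this neighborhood can be safely discarded, using the well-known overwhelming-probability bounds on the spectral radius under Assumption~\ref{assump:moments}). A standard two-stage transport argument---transport within each box at cost $\leq\ell$ per unit of mass, then transport between boxes to equalize counts at cost $O(1)$ per unit of mass---gives
\[
W_1(\mu_{X_n/\sqrt{n}},\mu_{\mathrm{pred}})\ll \ell+\frac{1}{n}\sum_{j}\bigl|N(B_j)-n\,\mu_{\mathrm{disk}}(B_j)\bigr|,
\]
where $N(B_j)$ counts the eigenvalues of $X_n/\sqrt{n}$ in $B_j$. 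Observe that the two terms on the right balance at exactly $\ell=n^{-1/4}$: to obtain a final bound of $n^{o(1)-1/4}$ it suffices to show that, with overwhelming probability, each box discrepancy is $n^{o(1)}\sqrt{n}\,\ell=n^{1/4+o(1)}$, so that summation over the $O(\ell^{-2})=O(\sqrt{n})$ boxes yields a total of $n^{3/4+o(1)}$.

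To establish such fluctuation estimates for $N(B_j)$, I would use the standard Hermitization / logarithmic-potential framework: express smoothed indicators of $B_j$ as $\frac{1}{2\pi}\Delta\phi_j$ for compactly supported bumps $\phi_j$, and represent
\[
\int \phi_j\, d\mu_{X_n/\sqrt{n}}-\int \phi_j\, d\mu_{\mathrm{disk}} =\frac{1}{2\pi}\int \Delta\phi_j(z)\bigl[U_n(z)-U_{\mathrm{disk}}(z)\bigr]\,d^2z,
\]
where $U_n(z)=\tfrac{1}{n}\log|\det(X_n/\sqrt{n}-zI)|$ and $U_{\mathrm{disk}}$ is its deterministic counterpart. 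The log-determinant is then controlled through the singular values of the shifted matrix $X_n-\sqrt{n}\,zI$: large singular values are handled via concentration/Hoffman--Wielandt type estimates, while the small ones require the quantitative least-singular-value bounds of Tao--Vu and Rudelson--Vershynin (valid under Assumption~\ref{assump:moments}), applied after a net argument in $z$ and upgraded to an almost-sure statement via Borel--Cantelli.

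The main obstacle is the uniform control of $U_n(z)-U_{\mathrm{disk}}(z)$ at the scale $\ell=n^{-1/4}$: the logarithmic singularity in the Hermitization integral is delicate to tame with only polynomial moment assumptions on the atom variable, and in particular must be integrated against test bumps whose Laplacians grow as $\ell^{-2}=\sqrt{n}$. The $n^{o(1)}$ loss in the final exponent is anticipated to come precisely from the combination of this logarithmic integrability at the bottom of the singular-value spectrum and the size of the net argument needed to make the bound uniform in $z$.
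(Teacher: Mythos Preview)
Your proposal is correct and structurally parallel to the paper's proof: the box partition at side length $n^{-1/4}$ and the two-stage transport (within-box at cost $O(n^{-1/4})$, between-box at cost $O(1)$ per unit mass) are exactly what the paper does. The substantive difference is in the choice of intermediate measure and in how the box-discrepancy estimate is obtained.

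The paper does not compare $\mu_{X_n/\sqrt{n}}$ directly to $\mu_{\mathrm{disk}}$ or to the predicted-location measure $\mu_{\mathrm{pred}}$. Instead it inserts a complex Ginibre matrix $G_n$ as an intermediary: the bound $W_1(\mu_{G_n/\sqrt{n}},\mu_{\mathrm{disk}})=O(\sqrt{\log n}\, n^{-1/4})$ is already available from \cite{MM} (sharper from \cite{CHM}), so by the triangle inequality only $W_1(\mu_{X_n/\sqrt{n}},\mu_{G_n/\sqrt{n}})$ needs work. For the box discrepancy $|N(R_\ell)-\widehat N(R_\ell)|\leq n^{1/4+o(1)}$ the paper then simply invokes the local circular law of Alt--Erd\H{o}s--Kr\"uger \cite{AEK2} as a black box (applied once to $X_n$ and once to $G_n$, with smoothed indicators of the squares). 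Your Hermitization sketch is essentially a proposal to re-derive that local-law input by hand; it is sound in outline, but the ``main obstacle'' you yourself flag---uniform control of the log-potential against test bumps with $\|\Delta\phi_j\|_\infty\sim\sqrt{n}$, combined with least-singular-value bounds---is precisely the nontrivial content of \cite{AEK2}, so the paper's route of citing it is considerably shorter. What the Ginibre detour buys is that one never has to compare eigenvalue counts to the deterministic density $\mu_{\mathrm{disk}}$ at the box level; one only compares two random counts, and the deterministic centering cancels automatically when \cite{AEK2} is applied twice.
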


The bound of $n^{o(1) -1/4}$ for the Wasserstein distance appears far from optimal; we include this result since it follows as a simple corollary of our methods.  For the case when $X_n$ is drawn from the complex Ginibre ensemble, it is shown in \cite{MM} that almost surely 
\begin{equation} \label{eq:W1MM}
	W_1 ( \mu_{X_n/\sqrt{n}}, \mu_{\mathrm{disk}}) = O \left( \frac{ \sqrt{\log n} }{ n^{1/4} } \right) 
\end{equation} 
using a coupling argument.  
The best bound known to date for the complex Ginibre ensemble is 
\begin{equation} \label{eq:W1CHM}
	W_1 ( \mu_{X_n/\sqrt{n}}, \mu_{\mathrm{disk}}) = O \left( \sqrt{ \frac{ {\log n} }{ n  } } \right), 
\end{equation}
which is due to Chafa\"{\i}, Hardy, and Ma\"{\i}da \cite{CHM}.

%
%
%
%
%
%
%
%

\subsection{Overview and outline} 
The proof of Theorem \ref{thm:iem:main} is presented in Section \ref{sec:iem:main}.  The proofs of Theorems \ref{thm:finiteStats} and \ref{thm:infiniteStats} are presented in Section \ref{sec:EStatsPf} and rely on the bounds from Section \ref{sec:iem:main}.  In the appendix, we state a version of the local circular law established recently by Alt, Erd\H{o}s, and Kr\"{u}ger in \cite{AEK2} required for the proofs of our main results.

\subsection*{Acknowledgements}
The authors would like to thank Alexander Soshnikov for originally pointing out this problem.  
The authors also wish to thank L\'{a}szl\'{o} Erd\H{o}s for references and comments which improved an earlier draft of this manuscript, and we are grateful to Gaultier Lambert for providing useful references.    
The first author thanks Elizabeth Meckes and Mark Meckes for useful discussions and references.

\section{Proof of Theorem \ref{thm:iem:main}} \label{sec:iem:main}

Let $X_n$ be as in the statement of Theorem \ref{thm:iem:main}, and let $G_n$ be an $n \times n$ iid matrix drawn from the complex Ginibre ensemble.  In view of the results from \cite{MM} (see \eqref{eq:W1MM}) or \cite{CHM} (see \eqref{eq:W1CHM}) and the Borel--Cantelli lemma, it suffices to prove that
\[ W_1 ( \mu_{X_n/\sqrt{n}}, \mu_{G_n/\sqrt{n}} ) \leq n^{o(1) - 1/4} \]
with overwhelming probability.  

To bound $W_1(\mu_{X_n/\sqrt{n}}, \mu_{G_n/\sqrt{n}})$, we observe that for any permutation $\sigma$ on $\{1, \ldots, n\}$
\begin{equation} \label{eq:iem:tobound} 
	W_1(\mu_{X_n/\sqrt{n}}, \mu_{G_n/\sqrt{n}}) \leq \frac{1}{n} \sum_{k=1}^n \frac{ |\lambda_k(X_n) - {\lambda}_{\sigma(k)}(G_n)|}{ \sqrt{n} }. 
\end{equation}
The real work here is to construct an advantageous $\sigma$, which defines how we pair each eigenvalue of $X_n$ to an eigenvalue of $G_n$.  Many will pair in a nice enough way to give a good bound; for those that do not pair nicely, we will use the following ``worst-case scenario'' bound.

%
\begin{theorem}
	For any fixed $\eps > 0$, with overwhelming probability, all the eigenvalues of $X_n$ and $G_n$ are contained in the disk $\{z \in \mathbb{C} : |z| < (1+\eps) \sqrt{n} \}$.
	\label{thm:iem:specRad}
\end{theorem}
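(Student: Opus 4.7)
The plan is to treat the two ensembles separately, as the arguments are of different flavor. For the complex Ginibre matrix $G_n$, the spectral radius is controlled by the determinantal structure; for the general iid matrix $X_n$, one has to work harder because the trivial bound via the operator norm is not tight.

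For $G_n$, I would invoke Kostlan's theorem: the unordered multiset $\{|\lambda_i(G_n)|^2 : 1 \leq i \leq n\}$ is distributed as an independent collection $\{Z_1, \ldots, Z_n\}$ with $Z_j \sim \Gamma(j,1)$. A union bound together with a standard large-deviation estimate for the gamma distribution gives
\[
\Prob\bigl[\rho(G_n) > (1+\eps)\sqrt{n}\bigr] \leq n \cdot \Prob[Z_n > (1+\eps)^2 n] \leq e^{-c(\eps) n},
\]
which is much stronger than overwhelming probability.

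For the general $X_n$, my plan is to invoke the local circular law of Alt, Erd\H{o}s, and Kr\"uger from \cite{AEK2}, a version of which the authors already cite in the appendix. That result gives quantitative control on the number of eigenvalues of $X_n/\sqrt{n}$ inside mesoscopic boxes, with the comparison being to the circular-law density $\pi^{-1}\indicator{|z| \leq 1}$. In particular, if one tiles the annulus $\{1+\eps/2 < |z| < 2\}$ by boxes of side length some small but polynomial $n^{-\delta}$, the local law forces each box to contain $o(1)$ eigenvalues with overwhelming probability, and a union bound over the (polynomially many) boxes then rules out any eigenvalue there. Combined with a crude operator-norm bound $\|X_n\|_{\mathrm{op}} \leq (2+\eps)\sqrt{n}$ w.o.p.\ (itself routine from the trace method and Assumption \ref{assump:moments}) to handle the annulus $\{2 < |z| < (2+\eps)\sqrt{n}\}$, one obtains the desired spectral radius bound.

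The main obstacle is precisely that the obvious estimate $\rho(X_n) \leq \|X_n\|_{\mathrm{op}}$ only produces a disk of radius roughly $2\sqrt{n}$: the trace/Frobenius method applied to $X_n^k (X_n^*)^k$ yields Catalan-type combinatorial factors that saturate at the operator norm, not the spectral radius. Improving the constant from $2$ to $1+\eps$ requires genuinely non-Hermitian input, which in this approach is supplied by the local circular law (alternatively, one could quote Bai--Yin's original spectral-radius analysis, which uses a more delicate moment argument tailored to the spectral radius rather than the operator norm). Once one commits to citing the local law, the argument is short; re-deriving the bound from first principles would be substantially more work and is not needed for the applications in this paper.
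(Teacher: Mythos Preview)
The paper does not prove this theorem at all: it simply records that the statement is a direct consequence of \cite[Remark~2.2]{AEK2}, or alternatively of \cite[Theorem~2.5(ii)]{AEK} or \cite[Theorem~1.4]{T}. Your Kostlan argument for $G_n$ is perfectly correct and gives exponential decay, which is more than enough; that part is fine and is one standard route to the Ginibre case.

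Your proposed argument for $X_n$, however, has a genuine gap. The local circular law (in any of its incarnations, including \cite[Theorem~2.3]{AEK2} as the paper quotes it in Theorem~\ref{thm:loc:main}) controls the eigenvalue count in a mesoscopic region only up to an additive error of size $O(\|\lap f\|_{1}\, n^{o(1)})$. In two real dimensions the quantity $\|\lap f\|_{1}$ is scale-invariant and is bounded below by an absolute constant for \emph{any} smooth bump function $f$ with $f\equiv 1$ on a nonempty open set; consequently the error term is $O(n^{o(1)})$ and never $o(1)$, regardless of how small you make your boxes. So after comparing to $G_n$ (or to the limiting density) you learn only that each box in the annulus contains $O(n^{o(1)})$ eigenvalues of $X_n$, not zero, and the union bound over polynomially many boxes gives no contradiction. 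Summing over all boxes yields at best $O(n^{1/2+o(1)})$ eigenvalues in the annulus, which is exactly the type of estimate the paper \emph{uses} the local law for (cf.\ Theorem~\ref{thm:iem:TV}) but is far from ruling out a single outlier.

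Getting from the constant $2$ (operator norm) down to $1+\eps$ (spectral radius) with overwhelming probability requires genuine edge input that the local law alone does not supply. Your parenthetical about Bai--Yin is closer in spirit, but Bai--Yin gives almost-sure convergence rather than quantitative tail bounds; to obtain overwhelming probability under Assumption~\ref{assump:moments} one really does need to cite one of the results the paper points to (\cite[Remark~2.2]{AEK2}, \cite[Theorem~2.5(ii)]{AEK}, or \cite[Theorem~1.4]{T}), which establish the spectral-radius bound directly rather than via box-counting.
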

Theorem \ref{thm:iem:specRad} follows from Remark 2.2 in \cite{AEK2}; in fact, the results in \cite{AEK2} provide much greater precision for a larger class of independent-entry matrices than what is stated here.  Alternatively, Theorem \ref{thm:iem:specRad} also follows from Theorem 2.5(ii) in \cite{AEK} or Theorem 1.4 in \cite{T}.  

A \emph{square} is a set of the form $\{ z \in \mathbb{C} : a \leq \Re(z) < b, c \leq \Im(z) < d \}$ for some real values $a,b,c,d$ which satisfy $b - a = d - c$.  In this case, the side length of the square is $b-a$ and the area is $(b-a)^2$.  The center of the square is $(a + b)/2 + \sqrt{-1} (c + d)/2$.  

For a Borel set $B \subset \mathbb{C}$, let $N(B)$ denote the number of eigenvalues of $X_n$ in $B$ and $\widehat{N}(B)$ denote the number of eigenvalues of $G_n$ in $B$.  

Fix a constant $C > 1$ for which all of the eigenvalues of $X_n$ and $G_n$ are contained in the disk $\{z \in \C: \abs{z} < C\sqrt{n}\}$ with overwhelming probability. (Such a constant exists by Theorem \ref{thm:iem:specRad}.) 
Then, let $R$ be the circumscribed square to the disk $\{z \in \mathbb{C} : |z| < C \sqrt{n} \}$, so that the disk lies entirely inside $R$.  We partition, $R$ into disjoint sub-squares
\[ R = \bigcup_{\ell = 1}^L R_{\ell} \]
such that all sub-squares $R_{\ell}$ have the same side length $\Theta(n^{1/4})$ (and hence same area $\Theta(n^{1/2})$).  A simple area argument reveals that the sub-squares can be constructed so that $L = O(n^{1/2})$.  Our main tool is the following.  

\begin{theorem} \label{thm:iem:TV}
With the construction above, 
\begin{equation} \label{eq:iem:TV}
	\max_{\ell} |N(R_\ell) - \widehat{N}(R_{\ell}) | \leq n^{o(1) + 1/4} 
\end{equation}
with overwhelming probability.  
\end{theorem}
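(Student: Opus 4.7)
The plan is to apply the local circular law from the appendix (a quantitative version of the Alt--Erd\H{o}s--Kr\"uger result in \cite{AEK2}) separately to $X_n$ and to $G_n$, producing for each sub-square $R_\ell$ a deterministic prediction for the eigenvalue count, and then to compare the two counts by the triangle inequality. After rescaling by $1/\sqrt n$, each $R_\ell$ becomes a square of side length $\Theta(n^{-1/4})$ in the complex plane; this is well above the mesoscopic threshold at which the local circular law applies, so the law controls both $N(R_\ell)$ and $\widehat{N}(R_\ell)$ against the same deterministic quantity $n\,\mu_{\mathrm{disk}}(R_\ell/\sqrt n)$.

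For each fixed $\ell$, I would sandwich the sharp indicator of $R_\ell/\sqrt n$ between Lipschitz test functions that agree with it outside a thin boundary annulus of width $\eta$, for an $\eta$ slightly above the microscopic scale $n^{-1/2}$. Feeding these test functions into the local law yields, with overwhelming probability,
\[
N(R_\ell) = n\,\mu_{\mathrm{disk}}(R_\ell/\sqrt n) + O\!\left( n^{o(1) + 1/4} \right),
\]
and the analogous statement for $\widehat N(R_\ell)$ applied to $G_n$. A second application of the local law to the thin annulus itself certifies that the annular contribution is itself $O(n^{o(1)+1/4})$. The triangle inequality then gives $|N(R_\ell) - \widehat N(R_\ell)| \leq n^{o(1)+1/4}$ for each fixed $\ell$, and since there are only $L = O(n^{1/2})$ sub-squares, a union bound over $\ell$ preserves overwhelming probability while absorbing only an extra $n^{o(1)}$ factor.

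The squares in $R$ that lie outside the spectrum require separate, but immediate, treatment. For any $R_\ell$ entirely outside the disk $\{|z| < (1+\eps)\sqrt n\}$, Theorem \ref{thm:iem:specRad} gives $N(R_\ell) = \widehat N(R_\ell) = 0$ with overwhelming probability, and the desired inequality is trivial. For squares that straddle the circle $\{|z| = \sqrt n\}$, the local circular law continues to apply, since it is not restricted to bulk positions, and the deterministic prediction $n\,\mu_{\mathrm{disk}}(R_\ell/\sqrt n)$ is still $O(n^{1/2})$, so the same sandwich argument goes through without change.

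The main obstacle is to extract a clean bound for the indicator of a square from a local law stated for smooth or Lipschitz test functions. The delicate balance is to take the smoothing scale $\eta$ large enough that the local law can be invoked on the thin boundary annulus (requiring $\eta \gg n^{-1/2}$) yet small enough that the number of eigenvalues in that annulus is at most $n^{1/4+o(1)}$ (requiring $\eta \leq n^{-1/2+o(1)}$). Because these two demands leave a nonempty window of admissible $\eta$, the strategy succeeds, and the triangle inequality together with the union bound finishes the proof of \eqref{eq:iem:TV}.
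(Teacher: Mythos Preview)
Your proposal is correct and follows essentially the same approach as the paper: both apply the Alt--Erd\H{o}s--Kr\"uger local circular law to smooth approximations of the indicators $\oindicator{R_\ell}$, control the discrepancy on the thin boundary region separately, and take a union bound over the $L=O(n^{1/2})$ squares. The only minor differences are in packaging: the paper bundles the two applications of the local law (to $X_n$ and to $G_n$) into a single direct comparison statement (Theorem~\ref{thm:loc:main}) rather than routing each through the deterministic quantity $n\,\mu_{\mathrm{disk}}(R_\ell/\sqrt n)$ as you do, and it handles the boundary strip $T_\ell$ by covering it with $O(n^{1/4})$ unit balls and invoking the unit-ball eigenvalue bound $\widehat N(B(z_0,1))\ll n^{o(1)}$ from \cite{TV} for $G_n$ (then transferring to $X_n$ via Theorem~\ref{thm:loc:main}), whereas you apply the local law directly to a smooth cutoff on the annulus. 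Both routes yield the same $n^{1/4+o(1)}$ bound, so the differences are cosmetic.
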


The proof of Theorem \ref{thm:iem:TV} is based on a local circular law result established recently by Alt, Erd\H{o}s, and Kr\"{u}ger in \cite{AEK2}.  We present the proof in Appendix \ref{sec:TV}.  

We now turn our attention to completing the proof Theorem \ref{thm:iem:main}.  We begin by constructing the permutation $\sigma$ from \eqref{eq:iem:tobound}.  We say an eigenvalue $\lambda_k(X_n)$ of $X_n$ and an eigenvalues $\lambda_j(G_n)$ of $G_n$ \emph{pair up} if $\sigma(k) = j$.  Thus, the pairing of eigenvalues will construct $\sigma$.  

First pair $\min \{ N(R_{1}), \widehat{N}(R_1) \}$ eigenvalues of $X_n$ and $G_n$ that fall in $R_1$.  The choice of the pairing between these eigenvalues in $R_1$ is arbitrary.  Then pair $\min \{ N(R_{2}), \widehat{N}(R_2) \}$ eigenvalues of $X_n$ and $G_n$ that fall in $R_2$.  Continue in this way until $\min \{ N(R_{L}), \widehat{N}(R_L) \}$ eigenvalues are paired from $R_L$.  Then pair the remaining unpaired eigenvalues of $X_n$ and $G_n$ in an arbitrary fashion.  This completely determines the permutation $\sigma$.  Given $1 \leq k \leq n$, we say $k$ is \emph{good} if $\lambda_k(X_n)$ and $\lambda_{\sigma(k)}(G_n)$ both fall in the same sub-square $R_\ell$, $1 \leq \ell \leq L$, otherwise we say $k$ is \emph{bad}\footnote{Technically, it is more precise to say that the pair $(\sigma,k)$ is good or bad since the conditions depend on the permutation $\sigma$ as well as the index $k$.  Here, we have shortened the terminology to just involve the index $k$.}.  

From this point forward, we work on the event where all the eigenvalues of $X_n$ and $G_n$ are contained in $\{z \in \mathbb{C} : |z| \leq C \sqrt{n} \}$ and \eqref{eq:iem:TV} holds.  From Theorems 
\ref{thm:iem:specRad} and \ref{thm:iem:TV}, this event holds with overwhelming probability.  Continuing from \eqref{eq:iem:tobound}, we have
\begin{align}
	W_1(\mu_{X_n/\sqrt{n}}, \mu_{G_n/\sqrt{n}}) &\leq \frac{1}{n} \sum_{\ell = 1}^L \sum_{\substack{k : \lambda_k(X_n) \in R_{\ell} \\ k \text{ good}}} \frac{ |\lambda_k(X_n) - {\lambda}_{\sigma(k)}(G_n)|}{ \sqrt{n} } \label{eq:iem:bnd} \\
	&\qquad\qquad + \frac{1}{n} \sum_{\ell = 1}^L \sum_{\substack{k : \lambda_k(X_n) \in R_{\ell} \\ k \text{ bad}}} \frac{ |\lambda_k(X_n) - {\lambda}_{\sigma(k)}(G_n)|}{ \sqrt{n} }. \notag 
\end{align}
We bound the two terms separately.  

If $\lambda_k(X_n) \in R_\ell$ and $k$ is good, then $\lambda_{\sigma(k)}(G_n) \in R_{\ell}$.  Thus, the distance between the two eigenvalues is at most the diameter of $R_\ell$, which is $O(n^{1/4})$.  Thus, we obtain
\begin{equation} \label{eq:iem:conc1}
	\frac{1}{n} \sum_{\ell = 1}^L \sum_{\substack{k : \lambda_k(X_n) \in R_{\ell} \\ k \text{ good}}} \frac{ |\lambda_k(X_n) - {\lambda}_{\sigma(k)}(G_n)|}{ \sqrt{n} } \ll n^{-1/4}. 
\end{equation}
If $k$ is bad, then we use the fact that the eigenvalues of $X_n$ and $G_n$ are contained in the disk $\{z \in \mathbb{C} : |z| \leq C \sqrt{n} \}$ to obtain
\[  |\lambda_k(X_n) - {\lambda}_{\sigma(k)}(G_n)| \leq 2 C \sqrt{n}. \]
Therefore, using \eqref{eq:iem:TV} to bound the number of bad indices, we conclude that
\begin{equation} \label{eq:iem:conc2}
	\frac{1}{n} \sum_{\ell = 1}^L \sum_{\substack{k : \lambda_k(X_n) \in R_{\ell} \\ k \text{ bad}}} \frac{ |\lambda_k(X_n) - {\lambda}_{\sigma(k)}(G_n)|}{ \sqrt{n} } \ll \frac{L}{n} n^{o(1) + 1/4}. 
\end{equation}
As $L = O(n^{1/2})$, combining \eqref{eq:iem:conc1} and \eqref{eq:iem:conc2} with \eqref{eq:iem:bnd} completes the proof of the theorem.

\section{Proof of Theorems \ref{thm:finiteStats} and \ref{thm:infiniteStats}}\label{sec:EStatsPf} 

This section is devoted to the proofs of Theorems \ref{thm:finiteStats} and \ref{thm:infiniteStats}.

\subsection{Tools}

In some of our calculations below, we will need to know about the random set $I_n$.  The next lemma will help us to understand its distribution. 

\begin{lemma}
	Let $1 \leq K_n \leq n$, and assume $I_n$ is a random subset of $[n]$ chosen uniformly from among all $K_n$-sized subsets of $[n]$.  
	Let $J_n \subset [n]$ be fixed.  
	For $j = 0, 1, \ldots, K_n$,
	\begin{equation}
	\P\left(\abs{I_n \setminus J_n} = j\right) \leq \exp\left(\frac{K_n^2}{n}\cdot\frac{1}{\sqrt{1-\frac{K_n-1}{n}}}\right)p_n(j),
	\label{eqn:NearBin1}
	\end{equation}
	where $p_n(j)$ is the probability mass function of a binomial random variable with parameters $K_n$ and $1-\abs{J_n}/n$. 
	\label{lemma:NearBin}
\end{lemma}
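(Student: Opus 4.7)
The starting observation is that $|I_n \setminus J_n|$ is a hypergeometric random variable: since $I_n$ is a uniform $K_n$-subset of $[n]$, sampling $I_n$ is equivalent to drawing $K_n$ elements from $[n]$ without replacement, and $|I_n \setminus J_n|$ counts how many of these draws land in the ``success'' set $[n] \setminus J_n$ of size $n - |J_n|$. Hence
\[
\P(|I_n \setminus J_n| = j) = \frac{\binom{n - |J_n|}{j}\binom{|J_n|}{K_n - j}}{\binom{n}{K_n}}
\]
for $0 \leq j \leq K_n$, with the usual convention that binomial coefficients with out-of-range arguments vanish.

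My plan is to compare this expression directly to the binomial PMF $p_n(j)$ by forming the ratio and simplifying. Routine factorial algebra produces
\[
\frac{\P(|I_n \setminus J_n| = j)}{p_n(j)} = \prod_{i=0}^{j-1}\frac{n - |J_n| - i}{n - |J_n|}\cdot\prod_{i=0}^{K_n - j - 1}\frac{|J_n| - i}{|J_n|}\cdot\prod_{i=0}^{K_n - 1}\frac{n}{n - i}.
\]
The first two products are each at most $1$ (reflecting the intuition that sampling without replacement spreads mass more evenly than sampling with replacement), so the task reduces to controlling the third product, the only factor capable of pushing the ratio above $1$.

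For the third product, I would take logarithms, reducing the problem to bounding $-\sum_{i=0}^{K_n-1}\log(1 - i/n)$, and apply the elementary pointwise inequality $-\log(1-x) \leq x/\sqrt{1-x}$ valid on $[0,1)$. Combining this with the uniform estimate $\sqrt{1 - i/n} \geq \sqrt{1 - (K_n-1)/n}$ for $0 \leq i \leq K_n - 1$ and $\sum_{i=0}^{K_n-1} i \leq K_n^2/2$ gives
\[
-\sum_{i=0}^{K_n - 1}\log(1 - i/n) \leq \frac{1}{\sqrt{1 - (K_n - 1)/n}}\sum_{i=0}^{K_n - 1}\frac{i}{n} \leq \frac{K_n^2}{n}\cdot\frac{1}{\sqrt{1 - (K_n - 1)/n}},
\]
and exponentiating produces the bound in \eqref{eqn:NearBin1}.

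The only nontrivial point is the pointwise inequality $-\log(1-x) \leq x/\sqrt{1-x}$, which is a short derivative comparison: both sides vanish at $x = 0$, and showing that the derivative of the right-hand side dominates on $[0,1)$ reduces after clearing denominators to the identity $(2-x)^2 - 4(1-x) = x^2 \geq 0$. Edge cases (such as $|J_n| \in \{0, n\}$, or $j$ outside the natural support of the hypergeometric distribution, or $j = 0$ making one of the first two products empty) are immediate: either both sides of \eqref{eqn:NearBin1} agree exactly, or the left-hand side vanishes while the right-hand side is nonnegative.
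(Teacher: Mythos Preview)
Your proof is correct and follows essentially the same approach as the paper's: both compute the hypergeometric probability explicitly, compare it to the binomial mass function, and control the resulting excess factor via the pointwise inequality $-\log(1-x)\le x/\sqrt{1-x}$. The only cosmetic difference is the order of operations---the paper first replaces each denominator factor $n-i$ by the uniform lower bound $n-K_n+1$ and then applies the logarithmic inequality once with $x=(K_n-1)/n$, whereas you apply the inequality termwise to each $-\log(1-i/n)$ and then use $\sqrt{1-i/n}\ge\sqrt{1-(K_n-1)/n}$; both routes land on the same exponent.
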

\begin{proof}
	If $j > n-\abs{J_n}$ or $K_n - j >\abs{J_n}$, then it is impossible to have $\abs{I_n \setminus J_n} = j$ (for this would imply that either $\abs{I_n \setminus J_n} > n-\abs{J_n} = \abs{J_n^c}$ or $\abs{I_n \cap J_n} = K_n-j > \abs{J_n}$), so in these cases, \eqref{eqn:NearBin1} trivially holds. Otherwise, we have
	\begin{align*}
	\P(\abs{I_n \setminus J_n} = j) &= \frac{\binom{n-\abs{J_n}}{j}\binom{\abs{J_n}}{K_n-j}}{\binom{n}{K_n}}\\
	&= \binom{K_n}{j}\frac{(n-\abs{J_n})!}{(n-\abs{J_n}-j)!}\cdot \frac{\abs{J_n}!}{(\abs{J_n}-K_n+j)!} \cdot \frac{(n-K_n)!}{n!}\\
	&\leq \binom{K_n}{j}\left(\frac{n-\abs{J_n}}{n-K_n+1}\right)^j\left(\frac{\abs{J_n}}{n-K_n+1}\right)^{K_n-j}\\
	&\leq \left(\frac{n}{n-K_n+1}\right)^{K_n}p_n(j).
	\end{align*}
	Using the bound $-\log(1-x) \leq x/\sqrt{1-x}$ for $x \in [0,1)$, we find
	\begin{equation}
	\begin{aligned}
	\left(\frac{n}{n-K_n+1}\right)^{K_n} &= \exp\left[-K_n\log\left(1-\frac{K_n-1}{n}\right)\right] \leq \exp\left[K_n\cdot\frac{\frac{K_n-1}{n}}{\sqrt{1-\frac{K_n-1}{n}}}\right],
	\end{aligned}
	\label{eqn:KnDraws}
	\end{equation}
	which establishes \eqref{eqn:NearBin1}. 
\end{proof}	

We will need the following bound for the moments of the operator norm of an iid matrix.  While this bound is far from optimal, it will suffice for our purposes.  

\begin{lemma} \label{lemma:exnorm}
If $X_n$ is an $n \times n$ iid matrix whose atom distribution satisfies Assumption \ref{assump:moments}, then for any integer $m \geq 1$
\[ \E \|X_n \|_{op}^m \ll_m n^{2 + 2m}, \]
where $\| \cdot \|_{op}$ denotes the operator norm.  
\end{lemma}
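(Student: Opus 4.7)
The plan is to bound the operator norm crudely by the Frobenius norm and then estimate the latter by an elementary moment calculation. Since the bound $n^{2+2m}$ is far larger than what one actually needs, a very rough argument will comfortably suffice, and there is no real obstacle to overcome.

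First I would reduce to even exponents by noting that for odd $m$, Cauchy--Schwarz gives $\E \|X_n\|_{op}^m \leq (\E \|X_n\|_{op}^{2m})^{1/2}$. Hence it is enough to handle the case of even powers $2m$, and the estimate will follow once we show $\E \|X_n\|_{op}^{2m} \ll_m n^{2m}$, which is much stronger than what the statement requires.

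Next I would apply the standard inequality $\|X_n\|_{op} \leq \|X_n\|_F$, so that
\[
\|X_n\|_{op}^{2m} \;\leq\; \|X_n\|_F^{2m} \;=\; \Bigl(\sum_{i,j=1}^n |x_{ij}|^2\Bigr)^{m}.
\]
The power mean (or H\"older) inequality then yields
\[
\Bigl(\sum_{i,j=1}^n |x_{ij}|^2\Bigr)^{m} \;\leq\; n^{2(m-1)} \sum_{i,j=1}^n |x_{ij}|^{2m},
\]
and taking expectations, together with the moment bound $\E|x_{ij}|^{2m} \leq C_{2m}$ supplied by Assumption \ref{assump:moments}, gives $\E \|X_n\|_{op}^{2m} \leq C_{2m}\, n^{2m}$. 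Combining this with the Cauchy--Schwarz reduction above produces the desired estimate (indeed the stronger bound $\E \|X_n\|_{op}^m \ll_m n^m$). The only care needed is in keeping track of the dependence of the implicit constants on $m$ through the moment bounds $C_p$ from Assumption \ref{assump:moments}, which the paper's asymptotic conventions already allow.
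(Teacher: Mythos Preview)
Your argument is correct and in fact yields the sharper estimate $\E\|X_n\|_{op}^m \ll_m n^{m}$, well below the $n^{2+2m}$ stated. Both your proof and the paper's begin with the Frobenius bound $\|X_n\|_{op}\le\|X_n\|_F$, but diverge afterwards: the paper passes to a tail estimate, bounding $\P(\|X_n\|_F>t)$ by $\P(\max_{i,j}|x_{ij}|\ge t/n)$ via a union bound and Markov's inequality, and then recovers the moment by integrating the tail, which produces the looser $n^{2+2m}$. You instead bound the moment directly, applying the power-mean inequality to $\bigl(\sum_{i,j}|x_{ij}|^2\bigr)^m$ and then taking expectations term by term. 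Your route is shorter, avoids the tail integration entirely, and lands on the tighter $n^{2m}$ for even powers; the Cauchy--Schwarz reduction for odd $m$ is unnecessary (one can apply the same power-mean bound with exponent $m/2$, or simply invoke H\"older $\E\|X_n\|_{op}^m\le(\E\|X_n\|_{op}^{2m})^{1/2}$ as you do), but it is harmless. Either approach is adequate for how the lemma is used later.
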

\begin{proof}
By bounding the operator norm by the Frobenius norm, we deduce from Assumption \ref{assump:moments} that
\begin{align*} 
	\Prob( \|X_n\|_{op} > t) \leq \Prob \left( \max_{1\leq i,j \leq n} |x_{ij}| \geq \frac{t}{n} \right) \leq \sum_{i,j=1}^n \Prob \left( |x_{ij}| \geq \frac{t}{n} \right) \leq C_p \frac{n^{2 + p}}{t^p } 
\end{align*} 
for any $t > 0$ and any integer $p \geq 1$.  Thus, taking $p = 2m$, we conclude that
\begin{align*}
	\E \|X_n\|_{op}^m &= \int_0^\infty \Prob( \|X_n \|_{op}^m > t ) dt \\
	&\leq 1 + \int_1^\infty \Prob( \|X_n \|_{op} > t^{1/m} ) dt \\
	&\leq 1 + C_{2m} n^{2 + 2m} \int_1^\infty \frac{1}{t^2} dt \\
	&\ll_m n^{2+2m}. 
\end{align*}
\end{proof}

\subsection{Proofs of Theorems \ref{thm:finiteStats} and \ref{thm:infiniteStats}}

We prove Theorems \ref{thm:finiteStats} and \ref{thm:infiniteStats} simultaneously. To ensure that the following arguments can be directly applied in both situations, we define $K_n := \abs{I_n}= K$ in the case where $K$ is fixed. Our plan of attack will consist of several interpolations wherein we replace the eigenvalues of $X_n/\sqrt{n}$ in the sums with points from among the predicted locations introduced in \cite{MM} for the eigenvalues of an $n\times n$ matrix drawn from the complex Ginibre ensemble. To that end, we introduce the following notation (modeled after the notation from \cite{MM}). First, find a positive integer $N_n$ so that $(N_n-1)^2 \leq n \leq N_n^2$, and then, define $m_n := n-(N_n-2)^2$. Note that $n-m_n = (N_n-2)^2$ is a perfect square, and
\begin{equation}
2\sqrt{n}- 3 \leq m_n \leq 4\sqrt{n}.
\label{MM:mbds}
\end{equation}
Next, we define the predicted (or classical) locations $\widetilde{\lambda}_i$, for $1 \leq i \leq n - m_n$, as in \cite{MM}. That is, for a given $1 \leq i \leq n-m_n$, write $\ell_i := \lceil \sqrt{i} \rceil$ and $q_i := i - (\ell_i - 1)^2$, and define
\[
\widetilde{\lambda}_i := \frac{\ell_i-1}{\sqrt{n}}e^{2\pi\sqrt{-1}q_i/(2\ell_i-1)}.
\]
(For a more detailed and intuitive understanding of this construction, we direct the reader to \cite{MM}.) Finally, for $n-m_n < i \leq n$, let $\widetilde{\lambda}_{i}$ be any arbitrary deterministic value in the annulus $\set{z \in \C : \sqrt{1- \frac{m_n}{n}} \leq \abs{z} \leq 1}$; the particular choice of values will not be relevant for the proof, and one can safely choose $\widetilde{\lambda}_i = 1$ for all $n - m_n < i \leq n$. The idea here is to facilitate a coupling between random draws from $\{\widetilde{\lambda}_i : 1 \leq i \leq n \}$ and the uniform distribution on the unit disk in a fashion inspired by the methods used in \cite{MM}.  
The following intermediate result establishes that with overwhelming probability, most of the eigenvalues of $X_n/\sqrt{n}$ are reasonably near the predicted locations $\{\widetilde{\lambda}_i\}$. 

\begin{lemma}
	With overwhelming probability, there is a (random) permutation $\tau_n: [n] \to [n]$ and a (random) set $J_n \subset [n]$, both measurable with respect to the $\sigma$-algebra generated by $\set{x_{ij}}$, 
	so that
	\begin{equation} \label{eq:MMpairs}
	\max_{i \in [n] \setminus J_n}\abs{\lambda_i\left(X_n/\sqrt{n}\right) - \widetilde{\lambda}_{\tau_n(i)}} \ll n^{o(1)-1/4},
	\end{equation} 
	and $\abs{J_n} \ll n^{o(1)+3/4}$.
	\label{lemma:MMpairs}
\end{lemma}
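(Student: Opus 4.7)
The plan is to mimic the square-partition/transport argument used in the proof of Theorem \ref{thm:iem:main}, but with the Ginibre eigenvalues $\lambda_j(G_n)$ replaced by the deterministic predicted locations $\widetilde{\lambda}_j$. After rescaling by $1/\sqrt{n}$, the relevant region is the unit disk (plus a thin buffer to absorb outliers). I would first fix a constant $C>1$ so that, by Theorem \ref{thm:iem:specRad}, every eigenvalue of $X_n/\sqrt{n}$ lies in $\{|z| \leq C\}$ with overwhelming probability, and then partition the circumscribed square of $\{|z| \leq C\}$ into disjoint sub-squares $R_1,\dots,R_L$ of a common side length of order $n^{-1/4}$, so that $L = O(n^{1/2})$ and each sub-square has area $\Theta(n^{-1/2})$.

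Next I would compare, for each $\ell$, the eigenvalue count $N(R_\ell) := \#\{i : \lambda_i(X_n/\sqrt{n}) \in R_\ell\}$ with the predicted count $\widetilde{N}(R_\ell) := \#\{i \in [n-m_n] : \widetilde{\lambda}_i \in R_\ell\}$. The scaled version of Theorem \ref{thm:iem:TV} (equivalently, the local circular law stated in the appendix) gives
\[
\max_\ell \bigl| N(R_\ell) - n\,\mu_{\mathrm{disk}}(R_\ell) \bigr| \ll n^{o(1)+1/4}
\]
with overwhelming probability. On the deterministic side, the construction of $\widetilde{\lambda}_i$ places $2\ell-1$ equally spaced points on the circle of radius $(\ell-1)/\sqrt{n}$, so each thin annulus carries exactly the right fraction of the $n$ points; a direct area/arc-length count (as in \cite{MM}) yields $|\widetilde{N}(R_\ell) - n\,\mu_{\mathrm{disk}}(R_\ell)| \ll n^{1/4}$, the dominant contribution coming from boundary sub-squares which intersect $\partial\{|z|\leq 1\}$ or the annular edges of the ring decomposition. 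Combining the two bounds via the triangle inequality produces
\[
\max_\ell \bigl| N(R_\ell) - \widetilde{N}(R_\ell) \bigr| \ll n^{o(1)+1/4}.
\]

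Now I construct $\tau_n$ and $J_n$ exactly as in the pairing argument of Section \ref{sec:iem:main}. For each $\ell$, pair $\min\{N(R_\ell), \widetilde{N}(R_\ell)\}$ eigenvalues of $X_n/\sqrt{n}$ lying in $R_\ell$ with that many predicted locations lying in $R_\ell$ (arbitrary choice inside the sub-square); extend to a permutation $\tau_n$ of $[n]$ by pairing the leftover indices arbitrarily. Declare $i \in J_n$ if either $i$ is one of the $m_n = O(\sqrt{n})$ indices without a meaningful predicted location, or $\lambda_i(X_n/\sqrt{n})$ is not paired within its own sub-square. For every $i \in [n]\setminus J_n$ the eigenvalue $\lambda_i(X_n/\sqrt{n})$ and its partner $\widetilde{\lambda}_{\tau_n(i)}$ lie in a common sub-square of diameter $O(n^{-1/4})$, which gives \eqref{eq:MMpairs}. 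The number of unpaired indices in $R_\ell$ is $|N(R_\ell) - \widetilde{N}(R_\ell)| \ll n^{o(1)+1/4}$, so summing over $L = O(n^{1/2})$ sub-squares and adding the $O(\sqrt{n})$ boundary indices yields $|J_n| \ll n^{o(1)+3/4}$ as required. Measurability of $\tau_n$ and $J_n$ with respect to the $\sigma$-algebra generated by the entries of $X_n$ follows by making all ``arbitrary'' choices in the pairing procedure according to a fixed deterministic rule (e.g.\ lexicographic ordering of indices) once the eigenvalues are ordered.

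The main obstacle is the second bullet above: the purely combinatorial estimate on $\widetilde{N}(R_\ell)$. One has to verify, with uniform control over $\ell$, that the discrete ring-and-spoke configuration from \cite{MM} distributes points across any given sub-square of side $\Theta(n^{-1/4})$ at the expected rate $n\,\mu_{\mathrm{disk}}(R_\ell)$ up to a discrepancy of order $n^{1/4}$, treating carefully the sub-squares that straddle the boundary circle $|z|=1$, intersect several of the discrete rings, or meet the small inner annulus $\{\sqrt{1-m_n/n} \leq |z| \leq 1\}$ where the $m_n$ dummy points are parked. Everything else is a clean combination of the local circular law with the square-partition bookkeeping already used for Theorem \ref{thm:iem:main}.
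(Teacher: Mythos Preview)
Your approach is correct but genuinely different from the paper's.  The paper goes through an intermediate Ginibre matrix: first it pairs the eigenvalues of $X_n/\sqrt{n}$ with those of an independent complex Ginibre matrix $G_n/\sqrt{n}$ via exactly the square-partition argument you describe (this gives a bad set $J_n'$ of size $n^{o(1)+3/4}$), and then it pairs the Ginibre eigenvalues, taken in the spiral order of \cite{MM}, with the predicted locations $\widetilde{\lambda}_i$ by invoking the individual-eigenvalue concentration estimate Theorem 4.3 of \cite{MM} (this gives a second bad set $J_n''$ of size $n^{o(1)+1/2}$).  The lemma follows by setting $J_n = J_n' \cup J_n''$ and composing the two pairings.

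By contrast, you skip the Ginibre intermediary and pair $X_n/\sqrt{n}$ directly with the deterministic configuration $\{\widetilde{\lambda}_i\}$, trading the citation of \cite[Theorem 4.3]{MM} for a purely combinatorial discrepancy bound on $\widetilde{N}(R_\ell)$.  This buys you two things: (i) the measurability of $\tau_n$ and $J_n$ with respect to the $\sigma$-algebra of $\{x_{ij}\}$ is immediate, since no auxiliary random matrix enters the construction (in the paper's route one must argue, e.g.\ by Fubini, that a good realization of $G_n$ can be fixed); and (ii) the argument becomes more self-contained, not relying on the rather delicate spiral-ordering concentration result from \cite{MM}.  The price is that your combinatorial bound $|\widetilde{N}(R_\ell)-n\,\mu_{\mathrm{disk}}(R_\ell)|\ll n^{1/4}$ is not stated in \cite{MM} in this form and has to be proved from scratch; your outlined ring-by-ring count is the right idea and goes through with the indicated boundary care.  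One small correction: Theorem \ref{thm:iem:TV} and Theorem \ref{thm:loc:main} as stated compare $X_n$ to $G_n$, not to $n\,\mu_{\mathrm{disk}}$; for the bound you actually want you should cite \cite[Theorem 2.3]{AEK2} directly (or combine Theorem \ref{thm:iem:TV} with the analogous Ginibre-to-circular-law bound, which is elementary).
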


\begin{proof}
	We will compare the eigenvalues of $X_n/\sqrt{n}$ to the classical locations $\widetilde{\lambda}_i$ via the intermediate comparison of each collection to the (ordered) eigenvalues of an $n\times n$ matrix $G_n := (g_{ij})$ drawn from the complex Ginibre ensemble in such a way that the iid entries $g_{ij}$ are independent from the $\sigma$-algebra generated by $\{x_{ij}\}$ and $I_n$.  Our proof relies on some results from \cite{MM} by E. Meckes and M. Meckes, so for convenience, we adopt notation similar to theirs. In particular, we consider the spiral ordering of the eigenvalues of $G_n/\sqrt{n}$ defined in Step 1 of the outline of proof presented in \cite{MM}. More explicitly, we define the order $\prec$ on $\C$ by declaring that $0 \prec z$ for all $z \in \C$, and writing $w \prec z$ for $w, z \in \C \setminus\set{0}$ if any of the following conditions hold:
	\begin{enumerate}[label=({\roman*})]
		\item $\lfloor \sqrt{n}\abs{w}\rfloor < \lfloor\sqrt{n}\abs{z}\rfloor$;
		\item $\lfloor \sqrt{n}\abs{w}\rfloor = \lfloor\sqrt{n}\abs{z}\rfloor$ and $\arg{w} < \arg{z}$;
		\item $\lfloor \sqrt{n}\abs{w}\rfloor = \lfloor\sqrt{n}\abs{z}\rfloor$, $\arg{w} = \arg{z}$, and $\abs{w} \leq \abs{z}$.\label{MM:cond3}
	\end{enumerate}
	(Note that the last inequality comprising condition \ref{MM:cond3} is irrelevant because the eigenvalues of $G_n$ have distinct argument with probability 1.) Here, we use the convention that $\arg z \in (0, 2\pi]$.  Recall that $\lambda_1(G_n/\sqrt{n}), \ldots, \lambda_n(G_n/\sqrt{n})$ are the eigenvalues of $G_n/\sqrt{n}$ (ordered in some specific but arbitrary fashion).  We now define $\set{ \lambda_1'(G_n/\sqrt{n})}_{i=1}^n$ to be the eigenvalues of $G_n/\sqrt{n}$ ordered so that $\lambda_1'(G_n/\sqrt{n}) \prec \cdots \prec \lambda_n'(G_n/\sqrt{n})$.
	
	
	We first compare $\{\lambda_i(X_n/\sqrt{n}) \}$ to $\{\lambda'_i(G_n/\sqrt{n}) \}$. With overwhelming probability, we can construct a random permutation $\sigma$ as we did in the proof of Theorem \ref{thm:iem:main} above, 
	and define
	\[
	J'_n:=\set{1\leq k \leq n : \text{$k$ is \textit{bad}}}.
	\]
	By construction, 
	\[
	\max_{i \in [n] \setminus J'_n}\abs{\lambda_i\left(X_n/\sqrt{n}\right) - \lambda_{\sigma(i)}\left(G_n/\sqrt{n}\right)}\ll n^{-1/4},
	\]
	so by re-labeling the eigenvalues of $G_n/\sqrt{n}$ according to $\prec$, we have the following: with overwhelming probability, there is a permutation $\tau_n \in S_n$, measurable with respect to the $\sigma$-algebra generated by $\set{x_{ij}}$ and $\set{g_{ij}}$, so that 
	\begin{equation}
	\max_{i \in [n] \setminus J'_n}\abs{\lambda_i\left(X_n/\sqrt{n}\right) - \lambda'_{\tau_n(i)} (G_n/\sqrt{n})} \ll n^{-1/4}.
	\label{eqn:MMpart1a}
	\end{equation}
	In addition, Theorem \ref{thm:iem:TV} and the fact that $L = O(n^{1/2})$ together imply that with overwhelming probability,
	\begin{equation}
	\abs{J'_n} \ll L\cdot n^{o(1) + 1/4} \ll n^{o(1) + 3/4}.
	\label{eqn:MMpart1b}
	\end{equation}
	
	Next, we compare $\{\lambda_i' (G_n/\sqrt{n}) \}$ to $\{\widetilde{\lambda}_i\}$ using the results of \cite{MM}. Fix $\alpha > 0$, and define
	$$a_n := \sqrt{512\pi^2(\alpha+1)\log{n}} \ll n^{o(1)}.$$
	By Theorem 4.3 from \cite{MM} (with $s:=a_nn^{1/4}$),  there exists an absolute constant $C>0$, so that whenever $i \leq n-m_n$ satisfies
	\begin{equation}
	\lceil \sqrt{i} \rceil \leq \sqrt{n}-\sqrt{\log{n}}
	\label{eqn:iupper}
	\end{equation}and
	\begin{equation}
	9 \leq a_nn^{1/4} \leq \pi(\lceil \sqrt{i}\rceil - 1) + 2,
	\label{eqn:ilower}
	\end{equation} we have
	\begin{equation}
	\begin{aligned}
	\P&\left(\abs{\lambda_i'(G_n/\sqrt{n})-\widetilde{\lambda}_i} > \frac{a_n}{n^{1/4}}\right) \\
	&\qquad\qquad\leq C\exp\left(-\min\set{\frac{(a_nn^{1/4}-9)^2}{256\pi^2(\lceil \sqrt{i}\rceil-1)},\frac{a_nn^{1/4}-9}{4\pi}}\right)\\
	&\qquad\qquad\leq C\exp\left(-\min\set{\frac{a_n^2n^{1/2}-18a_nn^{1/4}}{256\pi^2\sqrt{n}},\frac{a_nn^{1/4}-9}{4\pi}}\right)\\
	&\qquad\qquad\ll\exp\left(-\frac{a_n^2n^{1/2}-18a_nn^{1/4}}{256\pi^2\sqrt{n}}\right)\\
	&\qquad\qquad\ll\exp\left(-\frac{a_n^2n^{1/2}}{512\pi^2\sqrt{n}}\right) = n^{-\alpha-1}.
	\end{aligned}\label{eqn:MMprob}
	\end{equation}
	Now, $9 \leq a_nn^{1/4}$ for large $n$, and $i \geq \left(\frac{a_nn^{1/4} + \pi - 2}{\pi}\right)^2$ implies the rightmost inequality in \eqref{eqn:ilower}. In addition  inequality \eqref{MM:mbds} establishes that for large $n$, 
	\[
	\sqrt{n} - \sqrt{\log{n}} = \sqrt{n - 2\sqrt{n\log{n}} + \log{n}} \leq\sqrt{n-4\sqrt{n}} \leq \sqrt{n-m_n} ,
	\] so $\lceil\sqrt{i}\rceil \leq \sqrt{n} - \sqrt{\log{n}}$ implies $i \leq n-m_n$. It follows that for large $n$, whenever
	\begin{equation*}
	\left(\frac{a_nn^{1/4} + \pi - 2}{\pi}\right)^2 \leq i \leq \left(\sqrt{n} - \sqrt{\log{n}} - 1\right)^2,
	\end{equation*}
	$i \leq n-m_n$ and conditions \eqref{eqn:iupper} and \eqref{eqn:ilower} hold, so via \eqref{eqn:MMprob},
	\begin{equation}
	\P\left(\abs{\lambda_i'(G_n/\sqrt{n})-\widetilde{\lambda}_i} > \frac{a_n}{n^{1/4}}\right) \ll n^{-\alpha-1}.
	\label{eqn:MMfinalProb}
	\end{equation}
	Define $J''_n \subset [n]$ to be the set of indices for which 
	\begin{equation}
	\left(\frac{a_nn^{1/4} + \pi - 2}{\pi}\right)^2 \leq \tau_n(i) \leq \left(\sqrt{n} - \sqrt{\log{n}} - 1\right)^2,
	\end{equation}
	does not hold. Then,
	\begin{equation}
	\abs{J''_n} \ll n^{o(1) + 1/2}
	\label{eqn:MMpart2a}
	\end{equation} and by the union bound applied to \eqref{eqn:MMfinalProb}, 
	\begin{equation}
	\max_{i \in [n] \setminus J''_n}\abs{\lambda'_{\tau_n(i)}(G_n/\sqrt{n})-\widetilde{\lambda}_{\tau_n(i)}} \ll n^{o(1)-1/4}
	\label{eqn:MMpart2b}
	\end{equation}
	with probability at least $1-C_\alpha n^{-\alpha}$ for a constant $C_\alpha > 0$ depending on $\alpha$. Since $\alpha > 0$ was arbitrary, the conclusion of Lemma \ref{lemma:MMpairs} follows by defining $J_n := J'_n \cup J''_n$ and combining \eqref{eqn:MMpart1a}, \eqref{eqn:MMpart1b}, \eqref{eqn:MMpart2a}, and \eqref{eqn:MMpart2b}.
\end{proof}

With Lemma \ref{lemma:MMpairs} in hand, we are ready to establish Theorems \ref{thm:finiteStats} and \ref{thm:infiniteStats}.  To that end, choose $\eta > 0$ small enough that $f$ is Lipschitz continuous in the disk $\set{z \in \C : \abs{z} \leq 1+\eta}$, and apply Theorem \ref{thm:iem:specRad} so that all eigenvalues of $X_n/\sqrt{n}$ are contained in the disk $\set{z \in \C : \abs{z} \leq 1+\eta/2}$. Let $E_n$ denote the event that the conclusions of Theorem \ref{thm:iem:specRad} and Lemma \ref{lemma:MMpairs} hold, and on this event, define the permutation $\tau_n:[n] \to [n]$ and $J_n \subset [n]$ as in Lemma \ref{lemma:MMpairs}. For completeness, on the complement of $E_n$, define $\tau_n$ to be the identity permutation and $J_n$ to be the empty set. For clarity, we also define the random variables $\mathcal A_n$ and $\mathcal B_n$ to be
\begin{align*}
\mathcal{A}_n &:= \sum_{i \in I_n}f(\widetilde{\lambda}_{\tau_n(i)}) - \E \left[ \sum_{i \in I_n}f(\widetilde{\lambda}_{\tau_n(i)}) \right],\\
\mathcal{B}_n &:= \sum_{i \in I_n} \left[f(\widetilde{\lambda}_{\tau_n(i)}) - f(\lambda_i(X_n/\sqrt{n}))\right] - \E\left[\sum_{i \in I_n}\left[f(\widetilde{\lambda}_{\tau_n(i)}) - f(\lambda_i(X_n/\sqrt{n}))\right]\right].
\end{align*}

Recall the definition of $S_n[f](X_n/\sqrt{n})$ from \eqref{def:Snf}.  We wish to determine the asymptotic behavior of 
\begin{equation} \label{eq:removefew}
\sum_{i \in [n]\setminus I_n} f(\lambda_i(X_n/\sqrt{n})) - \E \left[  \sum_{i \in [n]\setminus I_n} f(\lambda_i(X_n/\sqrt{n})) \right]  = S_n[f](X_n/\sqrt{n}) -\mathcal{A}_n + \mathcal{B}_n
\end{equation} 
and of 
\begin{equation} \label{eq:onlyfew}
\sum_{i \in I_n} f(\lambda_i(X_n/\sqrt{n})) - \E \left[ \sum_{i \in I_n} f(\lambda_i(X_n/\sqrt{n})) \right] = \mathcal{A}_n -\mathcal{B}_n,
\end{equation} 
which we will accomplish in several parts. First, we will show that $\mathcal{B}_n \to 0$ in probability as $n\to \infty$, and second, we will establish that $S_n[f](X_n/\sqrt{n})$ and $\mathcal{A}_n$ are independent. Afterwards, the proofs of Theorems \ref{thm:finiteStats} and \ref{thm:infiniteStats} will diverge. In particular, we will determine the limiting distribution to which $\mathcal{A}_n$ (resp., $\mathcal{A}_n/\sqrt{K_n}$) converges in law and apply Slutsky's theorem to establish the conclusion of Theorem \ref{thm:finiteStats} (resp., Theorem \ref{thm:infiniteStats}). 

The next few lemmas establish the joint limiting behavior of $S_n[f](X_n/\sqrt{n})$, $\mathcal{A}_n$, and $\mathcal{B}_n$.

\begin{lemma} 
$\mathcal{B}_n$ converges to zero in probability as $n \to \infty$.  
	\label{lemma:MMCto0}
\end{lemma}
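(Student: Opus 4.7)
The plan is to prove the stronger statement that $\mathcal{B}_n \to 0$ in $L^1$, which immediately implies convergence in probability. Writing $\mathcal{B}_n = \mathcal{C}_n - \E[\mathcal{C}_n]$, where
\[ \mathcal{C}_n := \sum_{i \in I_n}\left[f(\widetilde{\lambda}_{\tau_n(i)}) - f(\lambda_i(X_n/\sqrt{n}))\right], \]
it suffices to show $\E|\mathcal{C}_n| = o(1)$, since this controls both $\E[\mathcal{C}_n]$ and the fluctuation. I would split the estimate according to whether or not the event $E_n$ (on which both Theorem \ref{thm:iem:specRad} and Lemma \ref{lemma:MMpairs} hold) occurs.

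On $E_n^c$, the polynomial-tail assumption on $f$ combined with $|\widetilde{\lambda}_j| \leq 1$ yields
\[ |\mathcal{C}_n|\oindicator{E_n^c} \ll K_n\left(1 + \|X_n/\sqrt{n}\|_{op}^m\right)\oindicator{E_n^c}. \]
Taking expectations, applying Cauchy--Schwarz together with the moment bound of Lemma \ref{lemma:exnorm}, and using $\P(E_n^c) \ll n^{-\alpha}$ for arbitrary $\alpha$ (chosen large enough depending on $m$), one obtains $\E[|\mathcal{C}_n|\oindicator{E_n^c}] = o(1)$.

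On $E_n$, I would split $I_n = (I_n \setminus J_n) \cup (I_n \cap J_n)$. For the first piece, Lemma \ref{lemma:MMpairs} bounds each summand by a Lipschitz constant of $f$ times $n^{o(1)-1/4}$, so the total contribution is $\ll K_n \cdot n^{o(1) - 1/4}$, which is $o(1)$ under either assumption on $K_n$. For the second piece, on $E_n$ the spectral radius of $X_n/\sqrt{n}$ is at most $1 + \eta/2$, so the polynomial-tail bound makes each summand bounded by a constant, and the contribution is at most $2C\,|I_n \cap J_n|$. Since $I_n$ is independent of $X_n$, conditioning on $X_n$ renders $|I_n \cap J_n|$ hypergeometric with mean $K_n|J_n|/n$, and Lemma \ref{lemma:MMpairs} gives $|J_n| \ll n^{o(1)+3/4}$ on $E_n$. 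Thus $\E[|I_n \cap J_n|\oindicator{E_n}] \ll K_n n^{o(1) - 1/4} = o(1)$, and assembling the two pieces gives $\E[|\mathcal{C}_n|\oindicator{E_n}] = o(1)$.

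The main subtlety is in the treatment of the bad part on $E_n$: one must bound its contribution by $2C\,|I_n \cap J_n|$ rather than the crude $2C\,K_n$, because the latter would introduce an additional factor of $K_n$ that the threshold $K_n = O(n^{1/4-\eps})$ cannot absorb against the $K_n|J_n|/n$ factor coming from the hypergeometric mean. The independence of $I_n$ and $X_n$ (and in effect the elementary calculation underlying Lemma \ref{lemma:NearBin}) is what makes this estimate sharp enough to conclude.
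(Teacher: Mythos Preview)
Your proposal is correct and follows essentially the same route as the paper: define the absolute-value version of the sum, split on $E_n$ versus $E_n^c$, handle $E_n^c$ via Cauchy--Schwarz and Lemma~\ref{lemma:exnorm}, and on $E_n$ split the index set $I_n$ into indices in $J_n$ and indices outside $J_n$, using Lipschitz continuity for the latter and the conditional mean of $|I_n \cap J_n|$ for the former. The one difference is that you compute the hypergeometric mean $\E[|I_n\cap J_n|\mid X_n]=K_n|J_n|/n$ directly, whereas the paper routes this through Lemma~\ref{lemma:NearBin} (approximating the hypergeometric law by a binomial and then summing $j\,p_n(j)$ and $(K_n-j)\,p_n(j)$); your direct computation is a mild simplification that avoids the need for that lemma here.
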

\begin{proof}
Let
\[ q_n := \sum_{i \in I_n} \left| f(\widetilde{\lambda}_{\tau_n(i)} ) - f( \lambda_i(X_n/\sqrt{n}) ) \right|. \] 
By Markov's inequality, it suffices to show that
$\E [q_n] \to 0$ 
as $n \to \infty$.  

We decompose 
\begin{equation} \label{eq:qndecomp} 
	\E[q_n] = \E[q_n \oindicator{E_n}] + \E[q_n \oindicator{E_n^c} ], 
\end{equation}
and bound each term separately.  To do so, we will now utilize that $f$ has a polynomial tail.  Indeed, this assumption implies that there exists a constant $C > 0$ and an integer $m \geq 1$ so that
\begin{equation} \label{eq:fbnd}
	|f(z)| \leq C (1 + |z|^m) 
\end{equation} 
for all $z \in \mathbb{C}$.  For the second term in \eqref{eq:qndecomp}, we apply the Cauchy--Schwarz inequality to obtain
\[ \E [q_n \oindicator{E_n^c} ] \leq \sqrt{\Prob(E_n^c)} \sqrt{ \E q_n^2 }. \]
Since $E_n$ holds with overwhelming probability, in order to bound this term it will suffice to show that $\E [q_n^2] \ll n^{O(1)}$, where the implicit constants will depend on the constant $C$ and the integer $m$.    To obtain this bound, we apply the Cauchy--Schwarz inequality again and \eqref{eq:fbnd} to get
\begin{align*} 
	\E q_n^2 &\leq K_n \E \left[ \sum_{i \in I_n} \left| f(\widetilde{\lambda}_{\tau_n(i)} ) - f( \lambda_i(X_n/\sqrt{n}) ) \right|^2 \right] \\
	&\ll K_n^2 + K_n^2 \E \| X_n \|_{op}^{2m}, 
\end{align*}
where $\| \cdot \|_{op}$ denotes the operator norm.  Here, we have exploited the fact that the spectral radius of any matrix is bounded above by the operator norm.  
By Lemma \ref{lemma:exnorm}, we obtain the bound of $\E q_n^2 \ll n^{O(1)}$, which shows that the second term on the right-hand side of \eqref{eq:qndecomp} converges to zero.  

We now bound $\E[q_n \oindicator{E_n}]$ by partitioning the sample space into events $E_n \cap \{ |I_n \setminus J_n| = j\}$ for $j=0, \ldots, K_n$.  Observe that
\begin{align*}
	\E[q_n \oindicator{E_n}] &= \sum_{j=0}^{K_n} \E\left[ q_n \oindicator{E_n} \indicator{|I_n \setminus J_n| = j} \right]  \\
		&\ll \sum_{j=0}^{K_n} ( n^{o(1) -1/4} j + (K_n - j) )\E \left[ \oindicator{E_n} \indicator{|I_n \setminus J_n| = j}  \right] \\
		&= n^{o(1) -1/4}  \E \left[ \sum_{j=0}^{K_n} j \oindicator{E_n} \indicator{|I_n \setminus J_n| = j} \right] \! + \E \left[ \sum_{j=0}^{K_n} (K_n - j) \oindicator{E_n} \indicator{|I_n \setminus J_n| = j} \right],
\end{align*}
where we used the Lipschitz continuity of $f$ and \eqref{eq:MMpairs}, which holds on the event $E_n$.  To bound these two terms, we now apply Lemma \ref{lemma:NearBin} and use the assumption that $K_n = O(n^{1/4 - \eps})$.  Indeed, by conditioning on $X_n$ (which also fixes $J_n$), we have
\begin{align*} 
	\E \left[ \sum_{j=0}^{K_n} j \oindicator{E_n} \indicator{|I_n \setminus J_n| = j} \right] &= \E \left[ \oindicator{E_n} \sum_{j=0}^{K_n}  j \E \left[ \indicator{|I_n \setminus J_n| = j} \mid X_n \right] \right] \\
	&\ll \E \left[ \oindicator{E_n} K_n \left( 1- \frac{ |J_n|}{n} \right) \right]. 
\end{align*}
Here, we used the towering property of the conditional expectations and the fact that the event $E_n$ is measurable with respect to the $\sigma$-algebra generated by the entries of the matrix $X_n$.  
By hypothesis $K_n = O(n^{1/4-\eps})$, and so
\[ n^{o(1) - 1/4}\E \left[ \sum_{j=0}^{K_n} j \oindicator{E_n} \indicator{|I_n \setminus J_n| = j} \right] = o(1). \] 
We similarly bound
\begin{align*} 
	 \E \left[ \sum_{j=0}^{K_n} (K_n - j) \oindicator{E_n} \indicator{|I_n \setminus J_n| = j} \right] &= \E \left[ \oindicator{E_n} \sum_{j=0}^{K_n} (K_n - j) \E \left[ \indicator{|I_n \setminus J_n| = j} \mid X_n \right] \right] \\
	 &\ll \E \left[ \oindicator{E_n} \left( K_n - K_n \left( 1 - \frac{|J_n|}{n} \right) \right) \right] \\
	 &= o (1), 
\end{align*}
where we used that $|J_n| \ll n^{3/4 + o(1)}$  on the event $E_n$.  
Combining the bounds above, we find $\E[q_n \oindicator{E_n}] = o(1)$, which completes the proof.  
\end{proof}

\begin{lemma}
	For each $n$, the (random) set $\tau_n(I_n)$ is independent from $X_n$ and has the same distribution as $I_n$. In particular, the random variables $S_n[f](X_n/\sqrt{n})$ and $\mathcal{A}_n$ are independent.
	\label{lemma:MMind}
\end{lemma}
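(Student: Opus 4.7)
The plan is to establish independence by conditioning on the $\sigma$-algebra carrying all the randomness except $I_n$, and then to read off the claim about $\mathcal{A}_n$ and $S_n[f](X_n/\sqrt{n})$ as an immediate corollary of the first part.

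First, I would fix the $\sigma$-algebra $\mathcal{F}$ generated by the entries of $X_n$ together with any auxiliary randomness used to construct $\tau_n$ in the proof of Lemma \ref{lemma:MMpairs} (for instance, the Ginibre matrix $G_n$ appearing there). By construction, both the event $E_n$ and the permutation $\tau_n$ (which is the identity on $E_n^c$) are $\mathcal{F}$-measurable. Since $I_n$ is chosen independently of $X_n$, we may (after enlarging the probability space if necessary) take $I_n$ to be independent of $\mathcal{F}$ as well.

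Next, I would condition on $\mathcal{F}$. Given $\mathcal{F}$, $\tau_n$ is a deterministic permutation of $[n]$ while $I_n$ remains uniformly distributed on the collection of $K_n$-subsets of $[n]$. For any fixed subset $A \subset [n]$ with $|A| = K_n$,
\[
\P(\tau_n(I_n) = A \mid \mathcal{F}) = \P(I_n = \tau_n^{-1}(A) \mid \mathcal{F}) = \binom{n}{K_n}^{-1}.
\]
Because this conditional distribution does not depend on $\mathcal{F}$, the random set $\tau_n(I_n)$ is independent of $\mathcal{F}$ (and hence of $X_n$) and has the same (uniform) law on $K_n$-subsets of $[n]$ as $I_n$.

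For the second assertion, I would reindex the sum in $\mathcal{A}_n$ via $j = \tau_n(i)$ to write
\[
\mathcal{A}_n = \sum_{j \in \tau_n(I_n)} f(\widetilde{\lambda}_j) - \E\Biggl[\sum_{j \in \tau_n(I_n)} f(\widetilde{\lambda}_j)\Biggr],
\]
which, since the predicted locations $\widetilde{\lambda}_j$ are deterministic, depends on the underlying randomness only through $\tau_n(I_n)$. Because $\tau_n(I_n)$ is independent of $X_n$ and $S_n[f](X_n/\sqrt{n})$ is $\sigma(X_n)$-measurable, the two random variables are independent. The only potential subtlety — easily resolved by the probability-space enlargement mentioned above — is arranging that $I_n$ be independent of all the auxiliary randomness used to define $\tau_n$; I do not anticipate any genuine obstacle beyond this bookkeeping.
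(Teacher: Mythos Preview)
Your argument is correct and follows essentially the same route as the paper: both exploit that the uniform distribution on $K_n$-subsets is invariant under any fixed permutation, together with the independence of $I_n$ from the randomness determining $\tau_n$. The paper carries this out by an explicit decomposition $\sum_{\sigma\in S_n}\P(I_n=\sigma^{-1}(S),\,X_n\in B,\,\tau_n=\sigma)$, while you phrase the same computation via conditioning on $\mathcal{F}$; your version has the minor advantage of transparently accommodating any auxiliary randomness (such as the Ginibre matrix $G_n$) used in constructing $\tau_n$, whereas the paper relies on the assertion from Lemma~\ref{lemma:MMpairs} that $\tau_n$ is $\sigma(X_n)$-measurable.
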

\begin{proof}
	First, we show that $\tau_n(I_n)$ is independent from $X_n$. Indeed, suppose $S$ is a $K_n$-element subset of $[n]$ and $B$ is a Borel subset of $n\times n$ matrices with entries in $\C$. Recall that $I_n$ is independent from the entries of $X_n$ by hypothesis, and so $I_n$ is independent of the $\sigma$-algebra generated by $X_n$ and $\tau_n$ ($\tau_n$ is $X_n$-measurable by construction). Letting $S_n$ denote the set of permutations on $[n]$, it follows that
	\begin{align*}
	\P(\tau_n(I_n) = S,\ X_n \in B) & = \sum_{\sigma \in S_n}\P(\tau_n(I_n) = S,\ X_n \in B,\ \tau_n = \sigma)\\
	& = \sum_{\sigma \in S_n}\P(I_n = \sigma^{-1}(S),\ X_n \in B,\ \tau_n = \sigma)\\
	&= \sum_{\sigma \in S_n}\P(I_n = \sigma^{-1}(S))\cdot\P(X_n \in B,\ \tau_n = \sigma)\\
	&= \frac{1}{\binom{n}{K_n}} \sum_{\sigma \in S_n}\P(X_n \in B,\ \tau_n = \sigma)\\
	&=\frac{1}{\binom{n}{K_n}} \cdot \P(X_n \in B).
	\end{align*}
	We conclude that $\tau_n(I_n)$ and $X_n$ are independent and that $\tau_n(I_n)$ has the same distribution as $I_n$ since  
	\begin{align*}
	\P(\tau_n(I_n) = S) &=  \sum_{\sigma \in S_n}\P(I_n = \sigma^{-1}(S),\ \tau_n = \sigma)\\
	&= \sum_{\sigma \in S_n}\P(I_n = \sigma^{-1}(S))\cdot\P(\tau_n = \sigma)\\
	&= \frac{1}{\binom{n}{K_n}} \sum_{\sigma \in S_n}\P(\tau_n = \sigma)\\
	&= \frac{1}{\binom{n}{K_n}}.
	\end{align*}
	Furthermore, since $\{ \widetilde{\lambda}_i\}$ are deterministic, we observe that
	\[
	\mathcal{A}_n = \sum_{i \in \tau_n(I_n)}f(\widetilde{\lambda}_{i}) - \E\left[\sum_{i \in \tau_n(I_n)}f(\widetilde{\lambda}_{i})\right]
	\]
	is a function of only $\tau_n(I_n)$, while $S_n[f](X_n/\sqrt{n})$ is a function of only $X_n$, so $S_n[f](X_n/\sqrt{n})$ and $\mathcal{A}_n$ are independent.
\end{proof}

\begin{lemma} If $K_n = K$ is fixed, then
	\[
	\mathcal{A}_n \xrightarrow{n\to \infty} \sum_{i =1}^{K}\left( f(U_i) - \E \left[ f(U_i) \right] \right),
	\]
	in distribution, where $U_1, \ldots, U_K$ are iid complex random variables drawn from the uniform distribution $\mu_{\mathrm{disk}}$ on the unit disk. On the other hand, if $K_n \to \infty$ and $K_n = O(n^{1/4-\eps})$ for some fixed $\eps \in (0, 1/4)$, then,
	\[
	\frac{\mathcal{A}_n}{\sqrt{K_n}} \xrightarrow{n\to \infty} \mathcal{N} 
	\]
	in distribution, where $\mathcal{N}$ is a zero-mean complex normal distribution with covariances given in \eqref{eq:Ncov} and \eqref{eq:Ncov2}.  
	\label{lemma:MMdist}
\end{lemma}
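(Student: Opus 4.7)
The plan is to reduce both assertions to classical limit theorems for iid samples from the deterministic empirical measure $\mu_n := \frac{1}{n}\sum_{i=1}^n \delta_{\widetilde{\lambda}_i}$ of the Meckes--Meckes classical locations, using the invariance provided by Lemma~\ref{lemma:MMind} and a birthday-type coupling. By Lemma~\ref{lemma:MMind}, $\tau_n(I_n)$ is uniform on $K_n$-subsets of $[n]$ and independent of $X_n$, so $\mathcal{A}_n$ has the same distribution as $\sum_{i \in I_n'} f(\widetilde{\lambda}_i) - K_n \int f\,d\mu_n$, where $I_n'$ is uniform on $K_n$-subsets of $[n]$ (the centering is simply $K_n \int f\,d\mu_n$ by symmetry). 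Because every $\widetilde{\lambda}_i$ lies in the closed unit disk and the Meckes--Meckes discretization gives $\mu_n \to \mu_{\mathrm{disk}}$ weakly, as follows from the explicit ring structure of the $\widetilde{\lambda}_i$ (the $\ell$-th ring at radius $(\ell-1)/\sqrt{n}$ carrying $2\ell-1$ equally spaced points, matching the area element of the corresponding annulus), the Lipschitz continuity of $f$ on a neighborhood of the disk yields $\int f\,d\mu_n \to \E f(U)$, $\int \abs{f}^2\,d\mu_n \to \E \abs{f(U)}^2$, and analogous limits for $\Re f, \Im f$ and their pairwise products.

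To exploit this, I would couple sampling without replacement to iid sampling: let $Z_1,\dots,Z_{K_n}$ be obtained by picking iid uniform indices in $[n]$ and evaluating $\widetilde{\lambda}_{\,\cdot\,}$, let $A_n$ be the event that the chosen indices are distinct, and on $A_n$ set $I_n'$ to be this index set, which is then uniform on $K_n$-subsets by symmetry. A birthday bound gives $\Prob(A_n^c) \leq \binom{K_n}{2}/n = O(K_n^2/n)$, which tends to $0$ in both regimes (using $K_n = O(n^{1/4-\eps})$ in the growing case). Setting $\hat{\mathcal{A}}_n := \sum_{j=1}^{K_n}(f(Z_j) - \int f\,d\mu_n)$, we have $\mathcal{A}_n = \hat{\mathcal{A}}_n$ on $A_n$, so Slutsky's theorem reduces the claimed limits for $\mathcal{A}_n$ to the corresponding ones for $\hat{\mathcal{A}}_n$ and $\hat{\mathcal{A}}_n/\sqrt{K_n}$.

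For fixed $K_n = K$, the iid vector $(Z_1,\dots,Z_K)$ converges weakly to $(U_1,\dots,U_K)$; since $f$ is continuous and bounded on the closed unit disk, the continuous mapping theorem applied to $(z_1,\dots,z_K) \mapsto \sum_j f(z_j)$ together with $K \int f\,d\mu_n \to K\,\E f(U)$ yields $\hat{\mathcal{A}}_n \to \sum_{i=1}^K (f(U_i) - \E f(U_i))$ in distribution. For growing $K_n$, I would apply the Lindeberg CLT in $\R^2$ to the iid centered vectors $(\Re f(Z_j) - \int \Re f\,d\mu_n,\ \Im f(Z_j) - \int \Im f\,d\mu_n)$: these are uniformly bounded by $2\sup_{\abs{z}\leq 1}\abs{f(z)} < \infty$, so Lindeberg's condition is immediate, and their covariance matrices converge by the first paragraph to the matrix prescribed by \eqref{eq:Ncov}--\eqref{eq:Ncov2}, giving $\hat{\mathcal{A}}_n/\sqrt{K_n} \to \mathcal{N}$ in distribution.

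The main point requiring care is the weak convergence $\mu_n \to \mu_{\mathrm{disk}}$ together with the handling of the boundary indices $n-m_n < i \leq n$, whose classical locations are chosen arbitrarily in the annulus $\{z: \sqrt{1-m_n/n} \leq \abs{z} \leq 1\}$; since there are only $m_n = O(\sqrt{n})$ of these by \eqref{MM:mbds}, they contribute at most mass $O(n^{-1/2})$ to any integral and cannot affect weak limits. Aside from this routine bookkeeping, the proof is a standard combination of sampling-without-replacement coupling with the classical central limit theorem for triangular arrays.
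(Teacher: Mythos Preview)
Your proposal is correct and follows essentially the same route as the paper: use Lemma~\ref{lemma:MMind} to reduce to a uniform $K_n$-subset of the deterministic locations $\{\widetilde{\lambda}_i\}$, couple to iid sampling via a birthday argument, and then invoke weak convergence $\mu_n \to \mu_{\mathrm{disk}}$ (with the $m_n = O(\sqrt{n})$ boundary indices handled as negligible mass) together with the classical CLT for triangular arrays. The only cosmetic differences are that the paper completes the coupling explicitly by setting $I_n' := I_n$ on the collision event (so that $I_n'$ is exactly uniform, making the invocation of Slutsky literal), and applies Cram\'{e}r--Wold plus a univariate CLT rather than your direct bivariate Lindeberg argument; neither affects the substance.
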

\begin{proof}
	Since $K_n$ is much less than $n$, $\mathcal{A}_n$ acts like a sum of iid random variables, whose limiting behavior is much easier to understand. In order to state this observation more concretely, we will define a new index set $I'_n \subset [n]$, identical in distribution to $I_n$, but whose elements are coupled to a sequence of iid draws from $[n]$. To that end, let $Y_1, \ldots, Y_{K_n}$ be a sequence of iid uniformly random draws from $[n]$.  
	If $Y_1, \ldots, Y_{K_n}$ are distinct, we set $I_n' := \{Y_1, \ldots, Y_{K_n}\}$.  If they are not distinct, we take $I_n' := I_n$.  
	From this construction, it is easy to check that $I_n'$ has the same distribution as $I_n$.  
	Now, by Lemma \ref{lemma:MMind}, 
	$\tau(I_n)$ has the same distribution as $I_n$. It follows that $\mathcal{A}_n$ has the same distribution as
	\[
	\mathcal{A}'_n := \sum_{i \in I'_n}f(\widetilde{\lambda}_{i}) - \E \left[ \sum_{i \in I'_n}f(\widetilde{\lambda}_{i}) \right]. \\
	\]
	We will now check that, with probability $1-O(1/\sqrt{n})$, $I_n' = \{Y_1, \ldots, Y_{K_n}\}$.  Indeed, we compute 
	\begin{align*}
	\P(\text{$Y_1, \ldots, Y_{K_n}$ are distinct}) &=\frac{n}{n}\cdot \frac{n-1}{n}\cdots \frac{n-K_n+1}{n}\\
	&\geq \left(\frac{n-K_n + 1}{n}\right)^{K_n}\\
	&\geq \exp\left[-K_n\cdot\frac{\frac{K_n-1}{n}}{\sqrt{1-\frac{K_n-1}{n}}}\right] \\
	&\geq 1 - O \left( \frac{1}{\sqrt{n} } \right),
	\end{align*}
	where the second to last inequality follows from \eqref{eqn:KnDraws} above and the last inequality follows from the bound $e^{-x} \geq 1 - x$, valid for all $x \in \mathbb{R}$.  
	By the assumptions on $K_n$, this implies that 
	\[ \E \left| \sum_{i \in I_n'} f( \widetilde{\lambda}_i ) - \sum_{i=1}^{K_n} f(\widetilde{\lambda}_{Y_i} ) \right| \longrightarrow 0 \]
	as $n \to \infty$, and so  
	with probability $1-o(1)$, 
	\begin{equation} \label{eq:suminq}
	\mathcal{A}'_n = \sum_{i = 1}^{K_n}\left(f(\widetilde{\lambda}_{Y_i}) - \E \left[ f(\widetilde{\lambda}_{Y_i}) \right] \right) + o(1).
	\end{equation}
	It thus suffices to study the convergence of the sum on the right-hand side of \eqref{eq:suminq}.  
	
	Recall that the values $\widetilde{\lambda}_1, \ldots, \widetilde{\lambda}_n$ are deterministic, and so the sum on the right-hand side of \eqref{eq:suminq} is a sum of iid random variables.  Moreover, it follows from Lemma 2.1 in \cite{MM}\footnote{The probability measure $\nu_n$ from Lemma 2.1 in \cite{MM} differs slightly from the distribution of $\widetilde{\lambda}_{Y_1}$.  The difference only involves $m_n/n$ proportion of the mass corresponding to the values $\widetilde{\lambda}_i$ for $n-m_n < i \leq n$.  In view of \eqref{MM:mbds}, $m_n/n \to 0$ as $n \to \infty$, so this discrepancy between the distributions is negligible in the limit.  } that $\widetilde{\lambda}_{Y_1}$ converges in distribution to a random variable with distribution $\mu_{\mathrm{disk}}$.   
	Thus, in the case where $K_n = K$ is fixed, 
	\begin{equation*}
	\mathcal{A}'_n  \xrightarrow{n\to \infty} \sum_{i=1}^K\left(f(U_i)- \E \left[ f(U_i) \right] \right),
	\label{eqn:AnPrime1}
	\end{equation*}
	in distribution, where $U_1, \ldots, U_K$ are iid random variables with common distribution $\mu_{\mathrm{disk}}$.   Here, we have also exploited the fact that $f$ is bounded and continuous on the disk $\{z \in \mathbb{C} : |z| \leq 1\}$.  
	
	We now consider the case when $K_n \to \infty$, which will follow from the classical central limit theorem.  It will be slightly more convenient in this case to work with real-valued random variables.  Indeed, by the Cram\'{e}r--Wold device, it suffices to study the convergence of 
	\begin{equation} \label{eq:cwrv}
		\frac{1}{\sqrt{K_n} } \sum_{i = 1}^{K_n} \left( \alpha \left[ \Re f(\widetilde{\lambda}_{Y_i}) - \E \left[ \Re f(\widetilde{\lambda}_{Y_i})\right]\right] + \beta \left[ \Im f(\widetilde{\lambda}_{Y_i}) - \E \left[ \Im f(\widetilde{\lambda}_{Y_i}) \right] \right] \right) 
	\end{equation} 
	for arbitrary constants $\alpha, \beta \in \mathbb{R}$.  From the classical central limit theorem for triangular arrays (see, for instance, Theorem 3.4.5 in \cite{D}), it follows that the random variable in \eqref{eq:cwrv} converges in distribution to a mean zero normal random variable with variance 
	\[ \alpha^2 \var ( \Re f(U) ) + \beta^2 \var (\Im f(U)) + 2 \alpha \beta \cov( \Re f(U), \Im f(U) ), \]
	where $U$ has distribution $\mu_{\mathrm{disk}}$.  It follows from the Cram\'{e}r--Wold device that
	\begin{equation*}
	\frac{1}{\sqrt{K_n}}\mathcal{A}'_n \xrightarrow{n\to \infty} \mathcal{N} 
	\label{eqn:AnPrime2}
	\end{equation*}
	in distribution, 
	where $\mathcal{N}$ is the mean zero complex normal distribution with covariances defined in \eqref{eq:Ncov} and \eqref{eq:Ncov2}.  
\end{proof}

Recalling \eqref{eq:removefew} and \eqref{eq:onlyfew}, the proofs of Theorems \ref{thm:finiteStats} and \ref{thm:infiniteStats} now follow from Slutsky's theorem (see e.g.\ Lemma 1.10 in \cite{Petrov}) and Lemmas \ref{lemma:MMCto0}, \ref{lemma:MMind}, and \ref{lemma:MMdist}.

\appendix

\section{Local circular law} \label{sec:TV}

This section is devoted to the proof of Theorem \ref{thm:iem:TV}, which will follow from Theorem \ref{thm:loc:main} below.  Throughout this appendix, we let $\| f\|_p$ denote the $L^p$-norm of the function $f: \mathbb{C} \to \mathbb{C}$.  

\begin{theorem}[Local circular law] \label{thm:loc:main}
Fix $C > 0$, let $X_n$ be an $n \times n$ iid matrix whose atom distribution satisfies Assumption \ref{assump:moments}, and let $G_n$ be an $n \times n$ iid matrix drawn from the complex Ginibre ensemble. Then, with overwhelming probability,
\begin{equation}
\sum_{k=1}^n f(\lambda_k(X_n)) = \sum_{k=1}^n f(\lambda_k(G_n)) + O( \| \lap f \|_{1} n^{o(1)} ),
\label{eqn:locLaw}
\end{equation}
uniformly for all smooth functions $f: \mathbb{C} \to \mathbb{R}$ (possibly depending on $n$) that are supported on the disk $\{z \in \mathbb{C} : |z| \leq C \sqrt{n} \}$ and satisfy the bound
\begin{equation} \label{eq:loc:L3}
	\|\lap f \|_{3} \leq n^C \| \lap f \|_{1}. 
\end{equation}
\end{theorem}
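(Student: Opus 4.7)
The plan is to combine Girko's Hermitization identity with the Hermitized local circular law of Alt--Erd\H{o}s--Kr\"{u}ger~\cite{AEK2} to reduce \eqref{eqn:locLaw} to a pointwise comparison of two log-potentials. Specifically, the distributional identity $\lap \log|\cdot - w| = 2\pi\delta_w$ yields, for any smooth compactly supported $f$,
\[ \sum_{k=1}^n f(\lambda_k(X_n)) = \frac{1}{4\pi}\int_{\mathbb{C}} \lap f(z)\, L_n(z)\, d^2 z, \qquad L_n(z) := \log\det\!\bigl((X_n - zI)(X_n - zI)^*\bigr), \]
together with the same formula for $G_n$ involving a log-potential $\hat L_n$. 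Subtracting, the claim \eqref{eqn:locLaw} reduces to showing
\[ \mathcal{E}_n := \int_{\mathbb{C}} \lap f(z)\bigl[L_n(z) - \hat L_n(z)\bigr]\, d^2 z = O\!\left(\|\lap f\|_1\, n^{o(1)}\right) \]
with overwhelming probability, uniformly in all admissible $f$ supported in the disk of radius $C\sqrt{n}$.

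The heart of the argument is a pointwise log-potential comparison: for each fixed $z$ with $|z| \leq 2C\sqrt{n}$, I would invoke the Hermitized local law of~\cite{AEK2} applied to the shifted matrix $X_n - zI$ to show that there is a deterministic quantity $\mathsf{L}_n(z)$, depending only on $|z|$ through the limiting singular-value density of $X_n - zI$, such that $|L_n(z) - \mathsf{L}_n(z)| \leq n^{o(1)}$ with overwhelming probability. Since the deterministic limit is universal under Assumption~\ref{assump:moments}, the same bound with the same $\mathsf{L}_n(z)$ applies to $\hat L_n(z)$. The bulk of the singular-value spectrum is handled directly by the local law; the logarithmic singularity at the bottom of the spectrum is controlled by a standard polynomial lower bound $s_{\min}(X_n - zI) \geq n^{-A_0}$ valid under Assumption~\ref{assump:moments} (e.g.\ via Tao--Vu or Rudelson--Vershynin-type estimates). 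A union bound over an $n^{-O(1)}$-net in $z$, together with crude Lipschitz control of $L_n - \hat L_n$ derived from Theorem~\ref{thm:iem:specRad}, upgrades this to a uniform estimate on the set $\{|z| \leq C\sqrt{n}\} \setminus \mathcal{B}_n$, where $\mathcal{B}_n$ is an exceptional set of measure $|\mathcal{B}_n| \leq n^{O(1) - A}$ (for any prescribed $A$) on which the least singular value of $X_n - zI$ or $G_n - zI$ is within $n^{-A}$ of zero.

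On the good set the integrand is pointwise $\leq n^{o(1)}|\lap f(z)|$, contributing $O(\|\lap f\|_1 n^{o(1)})$ to $\mathcal{E}_n$. On $\mathcal{B}_n$, the a priori bound $|L_n - \hat L_n| \leq n^{O(1)}$ (coming from $s_{\min} \geq n^{-A_0}$) together with H\"older's inequality and~\eqref{eq:loc:L3} gives
\[ \biggl| \int_{\mathcal{B}_n} \lap f\, (L_n - \hat L_n)\, d^2 z \biggr| \leq \|\lap f\|_3\, n^{O(1)}\, |\mathcal{B}_n|^{2/3} \leq n^C\|\lap f\|_1\, n^{O(1) - 2A/3}, \]
which is absorbed into $O(\|\lap f\|_1 n^{o(1)})$ once $A$ is taken large enough. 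This is the only step where the regularity hypothesis~\eqref{eq:loc:L3} is used; it is there precisely to prevent $\lap f$ from concentrating on the rare bad set where the log-potentials can be singular.

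The principal obstacle is the pointwise log-potential comparison in the second paragraph, which depends entirely on the Hermitized local circular law of~\cite{AEK2} at the optimal scale $n^{-1+\gamma}$, combined with polynomial least-singular-value bounds under Assumption~\ref{assump:moments}. Once these deep inputs are in hand, the remaining steps (Girko, the net argument, and the H\"older bound on $\mathcal{B}_n$) are essentially routine bookkeeping.
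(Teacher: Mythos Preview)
Your approach is correct in broad outline but takes a longer route than the paper and has a slip in the exceptional-set estimate.

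The paper's argument is essentially a citation. \cite[Theorem~2.3]{AEK2} already delivers, for any iid matrix $M_n$ satisfying Assumption~\ref{assump:moments}, the comparison
\[
\sum_{k=1}^n f(\lambda_k(M_n)) - n\!\int f\, d\sigma \;=\; O\bigl(\|\lap f\|_1\, n^{o(1)}\bigr)
\]
with overwhelming probability (after the rescaling $\tilde f(z) = f(\sqrt n\, z)$ and letting the $\eps$ in their statement drift slowly to $0$), where $\sigma$ is the deterministic limiting density. Since $X_n$ and $G_n$ have the same variance profile, $\sigma$ is identical for both; applying the bound once to each and subtracting gives \eqref{eqn:locLaw} by the triangle inequality. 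The bounded-density hypothesis in \cite{AEK2} is removed by substituting the Tao--Vu least singular value bound, as explained in \cite[Remark~6.2]{AEK2}. What you have written is essentially a sketch of how \cite[Theorem~2.3]{AEK2} is proved internally (Girko, Hermitized local law, smallest-singular-value control, H\"older on a bad set), so you are reconstructing the black box rather than invoking the finished product.

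The slip: you define $\mathcal{B}_n$ as the set of $z$ where $s_{\min}(X_n - zI) \leq n^{-A}$ (or the analogue for $G_n$), with $A$ eventually taken arbitrarily large, and then on $\mathcal{B}_n$ you assert the pointwise bound $|L_n - \hat L_n| \leq n^{O(1)}$ ``coming from $s_{\min} \geq n^{-A_0}$''. But on $\mathcal{B}_n$ you have $s_{\min} \leq n^{-A} < n^{-A_0}$ by definition once $A > A_0$, so no such bound is available; indeed $L_n(z) = 2\sum_k \log|z - \lambda_k(X_n)| = -\infty$ at each eigenvalue, so there is no $L^\infty$ control of $L_n$ on $\mathcal{B}_n$. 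The standard repair is to use an $L^p$ bound instead: since $\log|\cdot| \in L^p_{\mathrm{loc}}$ for every finite $p$, Theorem~\ref{thm:iem:specRad} gives $\|L_n - \hat L_n\|_{L^p(\{|z| \leq C\sqrt n\})} \leq n^{O_p(1)}$ with overwhelming probability, and a two-step H\"older on $\mathcal{B}_n$ (one factor $\|\lap f\|_3$, one factor extracting a power of $|\mathcal{B}_n|$) then recovers the required $O(\|\lap f\|_1\, n^{o(1)})$ once $A$ is chosen large enough.
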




Theorem \ref{thm:loc:main} follows from the results of Alt, Erd\H{o}s, and Kr\"{u}ger in \cite{AEK2}. In particular, since the entries of $X_n$ and $G_n$ have the same variance, Theorem \ref{thm:loc:main} follows from the triangle inequality after applying \cite[Theorem 2.3]{AEK2} twice (once for $X_n/\sqrt{n}$ and once for $G_n/\sqrt{n}$). In each application, we set $a=0$ and $z_0=0$, and we consider the scaled functions $\tilde{f}(z):= f(\sqrt{n}z)$. To achieve the asymptotic factor $n^{o(1)}$ in \eqref{eqn:locLaw}, we note that the inequalities in \cite[Theorem 2.3]{AEK2} hold uniformly for all $n\in \mathbb{N}$, so $\eps$ may slowly converge to zero as a function of $n$.

Technically, \cite[Theorem 2.3]{AEK2} requires the additional assumption that the entries of $X_n$ have bounded density.  However, following Remark 2.5 in \cite{AEK2}, one can remove the bounded density assumption by utilizing a different bound on the least singular value.  For iid matrices, one can use the least singular value bound given by Tao and Vu  in \cite[Theorem 2.1]{TVcirc}.  Using this alternative bound requires a few small changes to the proof of \cite[Theorem 2.3]{AEK2}; these changes are explained in \cite[Remark 6.2]{AEK2} so we shall not repeat them here.    

While we have stated Theorem \ref{thm:loc:main} for iid matrices, the results in \cite{AEK2} are actually much more general and apply to a larger class of random matrices. Theorem \ref{thm:loc:main} should be compared to other local circular laws such as \cite[Theorem 20]{TV} and the main results in \cite{Y} (see also \cite{BYY, BYY2}).  Unfortunately, the random matrices under consideration here do not satisfy the assumptions of \cite[Theorem 20]{TV} since the real and imaginary parts of the entries of $X_n$ are not assumed to be independent.  In addition, \cite[Theorem 20]{TV} is only stated for open balls, while Theorem \ref{thm:loc:main} above will allow us to approximate squares (as well as other geometric shapes) by taking $f$ to be a smooth approximation to the indicator function.  Though very similar to Theorem \ref{thm:loc:main}, the results in \cite{Y} require the function $f$ to take a specific form (with a specific dependence on $n$), which differs slightly from what we use here.
%

\subsection{Proof of Theorem \ref{thm:iem:TV}}
It remains to prove Theorem \ref{thm:iem:TV}.  The proof relies on Theorem \ref{thm:loc:main} above and some results from \cite{TV}.  Before presenting the proof, we introduce the following notation.  For $z_0 \in \mathbb{C}$ and $r \geq 0$, let 
\[ B(z_0, r) := \{ z \in \mathbb{C} : |z - z_0| < r \} \] 
be the open ball of radius $r$ centered at $z_0$.

We now turn to the proof of Theorem \ref{thm:iem:TV}.   
Let $C' > 1$ be a large fixed constant so that the square $R$ is contained in $B(0, (C'-1) \sqrt{n})$.  From \cite[Theorem 20]{TV}, it follows that
\begin{equation} \label{eq:loc:errcnttilde}
	\widehat{N}(B(z_0, 1)) \ll n^{o(1)} 
\end{equation}
with overwhelming probability, uniformly for all $z_0 \in B(0, C' \sqrt{n})$.  Unfortunately, the matrix $X_n$ does not satisfy the assumptions of \cite[Theorem 20]{TV}, so we will need to use Theorem \ref{thm:loc:main} to obtain a version of \eqref{eq:loc:errcnttilde} for the eigenvalues of $X_n$.  Indeed, letting $f:\mathbb{C} \to [0,1]$ be a smooth approximation to the indicator function on $B(z_0, 1)$, we find from Theorem \ref{thm:loc:main} and \eqref{eq:loc:errcnttilde} that
\begin{equation} \label{eq:loc:errorcnt}
	N(B(z_0, 1)) \ll n^{o(1)} 
\end{equation}
with overwhelming probability, uniformly for all $z_0 \in B(0, C' \sqrt{n})$.  We will return to these bounds in a moment.  

For each $1 \leq \ell \leq L$, let $f_{\ell}:\mathbb{C} \to [0,1]$ be a smooth approximation to the indicator function on $R_\ell$ so that $f_\ell(z) = 1$ for $z \in R_{\ell}$ and $f_\ell(z) = 0$ for $z$ at distance $1$ or more from $R_{\ell}$.  The functions $f_\ell$ can be chosen to satisfy \eqref{eq:loc:L3} (for a suitably large choice of constant $C > 0$) and so that
\[ \max_{\ell} \sup_{z \in \mathbb{C} } |\lap f_{\ell}(z)| = O(1). \] 
A geometric argument that uses this bound for regions where $\abs{\lap f_\ell(z)} > 0$ shows that $\max_{\ell} \| \lap f_{\ell} \|_1 = O(n^{1/4})$.  Applying Theorem \ref{thm:loc:main} and the union bound, we conclude that
\begin{equation} \label{eq:loc:sumf}
	\max_{\ell} \left| \sum_{k=1}^n f_{\ell} (\lambda_k(X_n)) - \sum_{k=1}^n f_{\ell} (\lambda_k(G_n)) \right| \ll n^{1/4 + o(1)} 
\end{equation}
with overwhelming probability.  

We now go from the sums above to the counting functions $N(R_{\ell})$ and $\widehat{N}(R_{\ell})$.  For $1 \leq \ell \leq L$, let 
\[ T_{\ell} := \{z \in \mathbb{C} \setminus R_\ell : f_{\ell}(z) > 0 \}. \]
By construction $T_{\ell}$ is disjoint from $R_{\ell}$.  Moreover, all the points in $T_{\ell}$ are distance at most $1$ from $R_{\ell}$.  
By covering $T_{\ell}$ with unit balls, we can apply \eqref{eq:loc:errcnttilde}, \eqref{eq:loc:errorcnt}, and the union bound to obtain 
\begin{equation} \label{eq:loc:max}
	\max_{\ell} \left( N(T_{\ell}) + \widehat{N}(T_{\ell}) \right)  \ll n^{1/4+o(1)} 
\end{equation}
with overwhelming probability.  

Combining \eqref{eq:loc:sumf} and \eqref{eq:loc:max}, we conclude that
\begin{align*}
	\max_{\ell} | N(R_{\ell}) - \widehat{N}(R_{\ell}) | \!\!\; &\leq \max_{\ell}\! \left[ \left| \sum_{k=1}^n \!\!\; f_{\ell}(\lambda_k(X_n)) - \!\sum_{k=1}^n \!\!\; f_{\ell}(\lambda_k(G_n)) \right| \!\!\; + N(T_{\ell}) + \widehat{N}(T_{\ell}) \right] \\
	&\ll n^{1/4 + o(1)} 
\end{align*}
with overwhelming probability.  This completes the proof of Theorem \ref{thm:iem:TV}.

\bibliography{partStatsIID}{}

\begin{thebibliography}{10}

\bibitem{AEK}
J.~Alt, L.~Erd\H{o}s, and T.~Kr\"{u}ger.
\newblock Local inhomogeneous circular law.
\newblock {\em Ann. Appl. Probab.}, 28(1):148--203, 2018.

\bibitem{AEK2}
J.~Alt, L.~Erd\H{o}s, and T.~Kr{\"u}ger.
\newblock Spectral radius of random matrices with independent entries.
\newblock Available at arXiv:1907.13631, 2019.

\bibitem{B}
Z.~D. Bai.
\newblock Circular law.
\newblock {\em Ann. Probab.}, 25(1):494--529, 1997.

\bibitem{BPZ}
Z.~Bao, G.~Pan, and W.~Zhou.
\newblock Central limit theorem for partial linear eigenvalue statistics of
  {W}igner matrices.
\newblock {\em J. Stat. Phys.}, 150(1):88--129, 2013.

\bibitem{BD}
T.~Berggren and M.~Duits.
\newblock Mesoscopic fluctuations for the thinned circular unitary ensemble.
\newblock {\em Math. Phys. Anal. Geom.}, 20(3):Art. 19, 40, 2017.

\bibitem{BP2}
O.~Bohigas and M.~Pato.
\newblock Missing levels in correlated spectra.
\newblock {\em Physics Letters B}, 595(1):171 -- 176, 2004.

\bibitem{BP}
O.~Bohigas and M.~P. Pato.
\newblock Randomly incomplete spectra and intermediate statistics.
\newblock {\em Phys. Rev. E (3)}, 74(3):036212, 6, 2006.

\bibitem{BC}
C.~Bordenave and D.~Chafa\"{\i}.
\newblock Around the circular law.
\newblock {\em Probab. Surv.}, 9:1--89, 2012.

\bibitem{BYY}
P.~Bourgade, H.-T. Yau, and J.~Yin.
\newblock Local circular law for random matrices.
\newblock {\em Probab. Theory Related Fields}, 159(3-4):545--595, 2014.

\bibitem{BYY2}
P.~Bourgade, H.-T. Yau, and J.~Yin.
\newblock The local circular law {II}: the edge case.
\newblock {\em Probab. Theory Related Fields}, 159(3-4):619--660, 2014.

\bibitem{CHM}
D.~Chafa\"{\i}, A.~Hardy, and M.~Ma\"{\i}da.
\newblock Concentration for {C}oulomb gases and {C}oulomb transport
  inequalities.
\newblock {\em J. Funct. Anal.}, 275(6):1447--1483, 2018.

\bibitem{CES}
G.~Cipolloni, L.~Erd{\H o}s, and D.~Schr{\"o}der.
\newblock Central limit theorem for linear eigenvalue statistics of
  non-hermitian random matrices.
\newblock Available at arXiv:1912.04100, 2019.

\bibitem{CES2}
G.~Cipolloni, L.~Erd{\H o}s, and D.~Schr{\"o}der.
\newblock Fluctuation around the circular law for random matrices with real
  entries.
\newblock Available at arXiv:2002.02438, 2020.

\bibitem{CO}
N.~Coston and S.~O'Rourke.
\newblock Gaussian fluctuations for linear eigenvalue statistics of products of
  independent iid random matrices.
\newblock Available at arXiv:1809.08367, 2018.

\bibitem{D}
R.~Durrett.
\newblock {\em Probability: theory and examples}, volume~31 of {\em Cambridge
  Series in Statistical and Probabilistic Mathematics}.
\newblock Cambridge University Press, Cambridge, fourth edition, 2010.

\bibitem{E}
A.~Edelman.
\newblock The probability that a random real {G}aussian matrix has {$k$} real
  eigenvalues, related distributions, and the circular law.
\newblock {\em J. Multivariate Anal.}, 60(2):203--232, 1997.

\bibitem{F}
G.~B. Folland.
\newblock {\em Real analysis}.
\newblock Pure and Applied Mathematics (New York). John Wiley \& Sons, Inc.,
  New York, second edition, 1999.
\newblock Modern techniques and their applications, A Wiley-Interscience
  Publication.

\bibitem{Ginibre}
J.~Ginibre.
\newblock Statistical ensembles of complex, quaternion, and real matrices.
\newblock {\em J. Mathematical Phys.}, 6:440--449, 1965.

\bibitem{Girko}
V.~L. Girko.
\newblock The circular law.
\newblock {\em Teor. Veroyatnost. i Primenen.}, 29(4):669--679, 1984.

\bibitem{GT}
F.~G\"{o}tze and A.~Tikhomirov.
\newblock The circular law for random matrices.
\newblock {\em Ann. Probab.}, 38(4):1444--1491, 2010.

\bibitem{Jana}
I.~Jana.
\newblock Clt for non-hermitian random band matrices with variance profiles.
\newblock Available at arXiv:1904.11098, 2019.

\bibitem{J}
K.~Johansson.
\newblock On {{S}zeg\H{o}'s} asymptotic formula for {T}oeplitz determinants and
  generalizations.
\newblock {\em Bull. Sci. Math. (2)}, 112(3):257--304, 1988.

\bibitem{Kopel}
P.~Kopel.
\newblock Linear statistics of non-hermitian matrices matching the real or
  complex ginibre ensemble to four moments.
\newblock Available at arXiv:1510.02987, 2015.

\bibitem{KOV}
P.~Kopel, S.~O'Rourke, and V.~Vu.
\newblock Random matrix products: Universality and least singular values.
\newblock Available at arXiv:1802.03004, 2018.

\bibitem{G}
G.~Lambert.
\newblock Incomplete determinantal processes: from random matrix to {P}oisson
  statistics.
\newblock {\em J. Stat. Phys.}, 176(6):1343--1374, 2019.

\bibitem{MM}
E.~S. Meckes and M.~W. Meckes.
\newblock A rate of convergence for the circular law for the complex {G}inibre
  ensemble.
\newblock {\em Ann. Fac. Sci. Toulouse Math. (6)}, 24(1):93--117, 2015.

\bibitem{Mehta}
M.~L. Mehta.
\newblock {\em Random matrices and the statistical theory of energy levels}.
\newblock Academic Press, New York-London, 1967.

\bibitem{OR15}
S.~O'Rourke and D.~Renfrew.
\newblock Central limit theorem for linear eigenvalue statistics of elliptic
  random matrices.
\newblock {\em J. Theoret. Probab.}, 29(3):1121--1191, 2016.

\bibitem{OS}
S.~O'Rourke and A.~Soshnikov.
\newblock Partial linear eigenvalue statistics for {W}igner and sample
  covariance random matrices.
\newblock {\em J. Theoret. Probab.}, 28(2):726--744, 2015.

\bibitem{Petrov}
V.~V. Petrov.
\newblock {\em Limit theorems of probability theory}, volume~4 of {\em Oxford
  Studies in Probability}.
\newblock The Clarendon Press, Oxford University Press, New York, 1995.
\newblock Sequences of independent random variables, Oxford Science
  Publications.

\bibitem{RS}
B.~Rider and J.~W. Silverstein.
\newblock Gaussian fluctuations for non-{H}ermitian random matrix ensembles.
\newblock {\em Ann. Probab.}, 34(6):2118--2143, 2006.

\bibitem{RV}
B.~Rider and B.~Vir\'{a}g.
\newblock The noise in the circular law and the {G}aussian free field.
\newblock {\em Int. Math. Res. Not. IMRN}, (2):Art. ID rnm006, 33, 2007.

\bibitem{T}
T.~Tao.
\newblock Outliers in the spectrum of iid matrices with bounded rank
  perturbations.
\newblock {\em Probab. Theory Related Fields}, 155(1-2):231--263, 2013.

\bibitem{TVcirc}
T.~Tao and V.~Vu.
\newblock Random matrices: the circular law.
\newblock {\em Commun. Contemp. Math.}, 10(2):261--307, 2008.

\bibitem{TVESD}
T.~Tao and V.~Vu.
\newblock Random matrices: universality of {ESD}s and the circular law.
\newblock {\em Ann. Probab.}, 38(5):2023--2065, 2010.
\newblock With an appendix by Manjunath Krishnapur.

\bibitem{TV}
T.~Tao and V.~Vu.
\newblock Random matrices: universality of local spectral statistics of
  non-{H}ermitian matrices.
\newblock {\em Ann. Probab.}, 43(2):782--874, 2015.

\bibitem{Y}
J.~Yin.
\newblock The local circular law {III}: general case.
\newblock {\em Probab. Theory Related Fields}, 160(3-4):679--732, 2014.

\end{thebibliography}
\bibliographystyle{abbrv}

\end{document}